\author{Ronald Brown, George Janelidze, and George Peschke}
\thanks{The second author is partially supported by South African NRF.}
\address{Ronald Brown: School of Computer Science, \CR Bangor University, Bangor, United Kingdom \\ George Janelidze: Department of Mathematics and Applied Mathematics, \CR University of Cape Town, Rondebosch 7700, South Africa \\ George Peschke: Department of Mathematical and Statistical Sciences, \CR University of Alberta, Edmonton, Canada}
\title{Van Kampen's theorem for locally sectionable maps}
\keywords{Van Kampen Theorem, fundamental groupoid, locally sectionable map}
\begin{document}

\maketitle
\begin{abstract}
We generalize the Van Kampen theorem for unions of non-connected spaces, due to R. Brown and A. R. Salleh, to the context where families of subspaces of the base space $B$ are replaced with a `large' space $E$ equipped with a \textit{locally sectionable} continuous map $p:E\to B$.
\end{abstract}

\section{Introduction}
\label{sec-Introduction}

The most general purely topological version of the 1-dimensional theorem of Van Kampen seems to be the following, which involves the fundamental groupoid $\pi_1(B,S)$ on a set $S$ of base points, cf.  \cite{BLMS,B,Indag}.  Here we write also $\pi_1(B,S)$ for  $\pi_1(B, B  \cap S)$, so that if $U$ is a subspace  of $B$ then $\pi_1(U, S)$ is $\pi_1(U, U \cap S)$.

\begin{theorem}[\cite{BS}]
\label{thm:basic vKT}
Let $(B_\lambda)_{\lambda\in \Lambda}$ be a family of subspaces of $B$ such that the interiors of the sets $B_\lambda$ $(\lambda\in \Lambda)$ cover $B$, and let $S$ be a subset of $B$. Suppose $S$ meets each path-component of each one-fold, two-fold, and each three-fold intersection of distinct members of the family $(B_\lambda)_{\lambda\in \Lambda}$. Then there  is a coequalizer diagram in the category of groupoids:
	\begin{equation}\label{equn:1coequ}
	\xymatrix{\bigsqcup_{\lambda,\mu\in\Lambda}\pi_1(B_\lambda\cap B_\mu, S) \ar@<0.5ex>[r]^-{\alpha}\ar@<-0.5ex>[r]_-{\beta}&\bigsqcup_{\lambda\in\Lambda}\pi_1(B_\lambda, S)\ar[r]^-{\gamma}&\pi_1(B,S),}
	\end{equation}
	in which $\bigsqcup$ stands for the coproduct in the category of groupoids, and $\alpha$, $\beta$, and $\gamma$ are determined by the inclusion maps $B_\lambda\cap B_\mu\to B_\lambda$, $B_\lambda\cap B_\mu\to B_\mu$, and $B_\lambda\to B$, respectively.
\end{theorem}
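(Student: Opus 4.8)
The plan is to verify directly the universal property of the coequalizer displayed in \eqref{equn:1coequ}. It is immediate that $\gamma\alpha=\gamma\beta$, since both are the morphism induced by the inclusions $B_\lambda\cap B_\mu\to B$. So let $G$ be a groupoid and $\phi:\bigsqcup_{\lambda\in\Lambda}\pi_1(B_\lambda,S)\to G$ a morphism with $\phi\alpha=\phi\beta$; writing $f_\lambda$ for the component of $\phi$ on the summand $\pi_1(B_\lambda,S)$, this condition says precisely that for all $\lambda,\mu$ the two composites $\pi_1(B_\lambda\cap B_\mu,S)\to\pi_1(B_\lambda,S)\xrightarrow{f_\lambda}G$ and $\pi_1(B_\lambda\cap B_\mu,S)\to\pi_1(B_\mu,S)\xrightarrow{f_\mu}G$ coincide. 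I must construct a unique morphism $f:\pi_1(B,S)\to G$ with $f\gamma_\lambda=f_\lambda$ for all $\lambda$, where $\gamma_\lambda$ is the component of $\gamma$. On objects there is no choice: each $s\in S$ lies in some $B_\lambda$ because the interiors of the $B_\lambda$ cover $B$, so necessarily $f(s)=f_\lambda(s)$, and this is well defined by the compatibility applied to $s\in B_\lambda\cap B_\mu\cap S$.

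To define $f$ on morphisms the essential tool is the Lebesgue covering lemma. Represent a morphism of $\pi_1(B,S)$ by a path $a:[0,1]\to B$ with $a(0),a(1)\in S$; by the Lebesgue lemma applied to the open cover $\{a^{-1}(\mathrm{int}\,B_\lambda)\}$ of $[0,1]$, choose a partition $0=t_0<\dots<t_n=1$ and indices $\lambda_1,\dots,\lambda_n$ with $a([t_{i-1},t_i])\subseteq B_{\lambda_i}$. Each interior subdivision point $a(t_i)$ lies in $B_{\lambda_i}\cap B_{\lambda_{i+1}}$, so by the hypothesis on twofold intersections there is a path $u_i$ in $B_{\lambda_i}\cap B_{\lambda_{i+1}}$ from $a(t_i)$ to a point $s_i\in S$ in its path-component; take $u_0,u_n$ constant. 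Then $a$ is homotopic rel endpoints to the concatenation of the paths $\bar a_i:=u_{i-1}^{-1}\cdot a|_{[t_{i-1},t_i]}\cdot u_i$, each of which lies in $B_{\lambda_i}$ and has endpoints in $S$, hence represents a morphism of $\pi_1(B_{\lambda_i},S)$. I then set $f[a]:=f_{\lambda_n}[\bar a_n]\cdots f_{\lambda_1}[\bar a_1]\in G$.

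The work is in showing $f[a]$ is independent of the choices made: the correction paths $u_i$, the partition together with the indices $\lambda_i$, and the representative $a$. The first two reduce to elementary moves. Replacing a single $u_i$ by another correction path alters $[\bar a_i]$ and $[\bar a_{i+1}]$ by a morphism of $\pi_1(B_{\lambda_i}\cap B_{\lambda_{i+1}},S)$ and its inverse, which cancel in $G$ exactly by the compatibility $f_{\lambda_i}\alpha=f_{\lambda_{i+1}}\beta$; refining the partition is absorbed by the functoriality of a single $f_{\lambda}$; changing the index of one subinterval to another admissible one is again absorbed by twofold compatibility. Granting this, $f$ is visibly a functor (compose subdivisions to treat composition, trivial subdivision for identities), one has $f\gamma_\lambda=f_\lambda$ (use the one-piece subdivision with index $\lambda$ and constant corrections), and $f$ is unique, since any functor $f'$ with $f'\gamma_\lambda=f_\lambda$ must satisfy $f'[a]=\prod_i f'\gamma_{\lambda_i}[\bar a_i]=\prod_i f_{\lambda_i}[\bar a_i]=f[a]$ via the decomposition $[a]=[\bar a_n]\cdots[\bar a_1]$ in $\pi_1(B,S)$.

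I expect homotopy invariance to be the main obstacle. Given a homotopy $H:[0,1]^2\to B$ rel endpoints from $a$ to $a'$, the Lebesgue lemma subdivides $[0,1]^2$ into a grid of rectangles each mapped by $H$ into some $B_\lambda$. One chooses, once and for all, a correction path in a suitable intersection for every grid vertex, reads the corrected boundary of each rectangle as a relation holding in the corresponding $\pi_1(B_\lambda,S)$ and hence annihilated by $f_\lambda$, and telescopes these relations across the grid to obtain $f[a]=f[a']$. The delicate step is the consistency of the vertex corrections: an interior vertex is a corner of several rectangles lying in different $B_\lambda$, and the edge data through it must match up, which is exactly what controlling \emph{threefold} intersections delivers — the pattern being that the $0$-, $1$-, and $2$-dimensional strata of the argument (objects, paths, homotopies) are governed respectively by the one-, two-, and threefold intersection hypotheses. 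Once this bookkeeping is assembled, \eqref{equn:1coequ} is a coequalizer.
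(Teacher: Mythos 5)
Your overall strategy --- direct verification of the universal property via Lebesgue subdivision, correction paths running out to $S$, and a grid argument for homotopy invariance --- is exactly the strategy of \cite{BS} and of this paper's proof of the generalization, Theorem \ref{thm:main} (the paper only cites Theorem \ref{thm:basic vKT} and proves the general case by this method; your $u_i$ are the paper's $g_i$, your corrected pieces $\bar a_i$ correspond to the $h_i$ of Definition \ref{def:assocseq}, and your first ``elementary move'' is Lemma \ref{thm:Delta-Identity1}). But there is a genuine gap at the step you dismiss most quickly: the claim that ``changing the index of one subinterval to another admissible one is again absorbed by twofold compatibility.'' It is not. Suppose $a([t_{i-1},t_i])\subseteq B_\lambda\cap B_\mu$ and you compare the computations that assign this subinterval the index $\lambda$ versus the index $\mu$. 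The correction path at $t_{i-1}$ must lie in $B_{\lambda_{i-1}}\cap B_\lambda$ in the first computation and in $B_{\lambda_{i-1}}\cap B_\mu$ in the second; in general these are different paths ending at different points of $S$, and the transition $u'_{i-1}u_{i-1}^{-1}$ is a path in $B_{\lambda_{i-1}}$ that lies in no single twofold intersection, so the hypothesis $\phi\alpha=\phi\beta$ cannot be applied to it. What rescues the step is the \emph{threefold} hypothesis: since $a(t_{i-1})\in B_{\lambda_{i-1}}\cap B_\lambda\cap B_\mu$, one can choose a single correction path inside this triple intersection ending in $S$, after which both computations use literally the same corrected piece $\bar a_i\subseteq B_\lambda\cap B_\mu$ and twofold compatibility finishes. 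This is precisely the content of the paper's Lemma \ref{thm:Delta-Identity2}, whose auxiliary paths $r_i$ into $E\times_BE\times_BE$ ending over $S$ are exactly these triple-intersection corrections; the same device reappears in the paper's verification of $\varepsilon\gamma=\delta$.

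Consequently your closing ``pattern'' locating the threefold hypothesis in the two-dimensional (homotopy) stratum is off: in the paper the homotopy step proper (Lemma \ref{thm:Delta-Identity3}, a homotopy with \emph{fixed} subdivision and fixed indices) needs only twofold data, while the threefold hypothesis is consumed by the change-of-subdivision-and-indices comparison on a fixed path. This matters for your grid argument as well: an interior grid vertex is a corner of four rectangles carried into four (possibly distinct) members of the family, so choosing ``once and for all'' a correction path at each vertex ``in a suitable intersection'' would naively require fourfold intersections, which the hypotheses do not provide. The paper avoids this by alternating: a homotopy across one row of the grid with that row's indices held fixed (Lemma \ref{thm:Delta-Identity3}), then a change of index assignment on a fixed path along the next grid line (Lemma \ref{thm:Delta-Identity2}), so that at most three of the sets ever interact at once. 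With the change-of-index step proved from the threefold hypothesis and the grid bookkeeping reorganized in this alternating fashion, your proof goes through.
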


The idea for using more than one base point arose in the writing of the first version of \cite{ B} in order to  give a Van Kampen Theorem  general enough to  compute the fundamental group of the circle $S^1$,  which after all is  {\bf the} basic example in algebraic topology; of course the ``canonical" method for the circle is to use the covering map $\mathbb R \to S^1$,  but Theorem \ref{thm:basic vKT} deals also with a myriad of other cases.\footnote{We refer the reader to \url{https://mathoverflow.net/questions/40945/} for a discussion of the use of more than one base point, a notion   commonly not referred to in algebraic topology texts. }

   The use of many base points  has been  supported by Grothendieck in his 1984 ``Esquisse d'un Programme",  \cite[Section 2]{GrEsq}:
\begin{quote}
Ceci est li\'e notamment au fait que les gens s’obstinent encore, en calculant avec des groupes
fondamentaux, a fixer un seul point base, pl\^{u}tot que d’en choisir astucieusement tout un paquet
qui soit invariant par les symetries de la situation, lesquelles sont donc perdues en route. Dans
certaines situations (comme des th\'{e}or\`emes de descente \`a la Van Kampen pour les groupes fondamentaux) il est bien plus \'{e}l\'{e}gant, voire indispensable pour y comprendre quelque chose, de
travailler avec des groupo\"{\i}des fondamentaux par rapport \`a un paquet de points base convenable,....\end{quote}

And indeed, restricting to a single base point omits a vast range of other spaces available in a local-to-global way, that might require even a large number of base points, and new methods for analyzing the colimits involved. It is unreasonable to suggest that ``the answer has to be a group'',  or some form of presentation of a group; cf for example \cite{GGC}.

The use of groupoids also allows for the applications  of Higgins'  ``universal morphisms", see   \cite{H} and the Appendix B5 {\it Groupoids bifibered over Sets} of \cite{BHS} ; the fundamental   group of the circle  may be obtained from the groupoid $\mathcal I =\pi_1([0,1], \{0,1\})$ by identifying $0$ and $1$, as there is such a ``universal morphism" from $\mathcal I$ to the group of integers. Applications of groupoid notions to orbit spaces are given in the (current)  version of \cite[Chapter 11]{B}.

There are also accounts of the Van Kampen theorem for the fundamental group related to universal covers and Galois theory, \cite{D, BJ}; \cite{BJ} uses Galois theory of \cite{J}. In this paper we return to the classical approach, but replace families of subspaces of the base space $B$ with a `large' space $E$ equipped with a \textit{locally sectionable} continuous map $p:E\to B$; this makes our Main Theorem \ref{thm:main},  more general than Theorem \ref{thm:basic vKT}. In Section \ref{sec:MainTheorem} we prove Theorem \ref{thm:main} directly, and then, after making various additional remarks in Section \ref{sec:AdditionalRemarks}, briefly discuss a possibility of deducing it from Theorem \ref{thm:basic vKT} in Section \ref{sec:(1.1)->(2.2)}; for instance such deduction is obviously possible when all points of $B$ are taken as base points.

\section{Locally Sectionable Maps and the Main Theorem}
\label{sec:MainTheorem}

A continuous map $p:E\to B$ of topological spaces is said to be \textit{locally sectionable} if for every $b\in B$ there exist an open subset $U$ of $B$ such that $b\in U$ and the map $p^{-1}(U)\to U$ induced by $p$ has a continuous section. We are interested in such maps for two reasons:
\begin{enumerate}[$\bullet$]
		\item Let $(B_\lambda)_{\lambda\in \Lambda}$ be a family of subspaces of $B$, let $E$ be their coproduct, and suppose that  the interiors of the sets $B_\lambda$ $(\lambda\in \Lambda)$ cover $B$. Then the canonical map $p:E\to B$ is locally sectionable, and such a family of subspaces is used in the Brown\textendash Salleh version of van Kampen Theorem \cite{BS}, recalled above as Theorem \ref{thm:basic vKT}; we are going to extend that theorem to arbitrary locally sectionable maps. For convenience, for any map $p: E \to B$ and subset $S$ of $B$ we write $\pi_1(E,S)$ and $\pi_1(E\times_B E,S)$ for $\pi_1(E,E \times_B S)$ and $\pi_1(E\times_BE,E\times_BE\times_BS)$, respectively.
\item As shown in \cite{JT}, every locally sectionable map is an effective descent morphism in the category of topological spaces.
\end{enumerate}
Such maps also occur as surjective submersions in the context of \cite{NS}.

\begin{remark}
	$(\mathrm{a})$ Let us remove the word ``distinct'' from our requirement on $S$ in Theorem \ref{thm:basic vKT}. This will:
	\begin{enumerate}[$\bullet$]
		\item also remove ``one-fold, two-fold and'', since it will make one-fold and two-fold intersections special cases of three-fold intersections;
		\item make so modified Theorem \ref{thm:basic vKT} formally weaker by forcing each $U_\lambda$ to become path-connected.
	\end{enumerate}
	However, this formally weaker version of Theorem \ref{thm:main}  is actually easily equivalent to it.\\
	$(\mathrm{b})$ Consider the above-mentioned coproduct $E$ of the family $(B_\lambda)_{\lambda\in \Lambda}$ and the associated function $p: E  \to  B $. Since the fundamental groupoid functor $\pi_1$ preserves coproducts, we can write $\bigsqcup_{\lambda\in\Lambda}\pi_1(B_\lambda,S)=\pi_1(E,S)$ and
	\begin{equation*}
	\bigsqcup_{\lambda,\mu\in\Lambda}\pi_1(B_\lambda\cap B_\mu,  S)=\pi_1(E\times_BE,S)
	\end{equation*}
	which immediately suggests the formulation of the following  Main Theorem \ref{thm:main}. The proof, as in \cite{BS}, is by verification of the universal property; for this purpose,  we do not even need to know  that coequalisers exist in the category of groupoids, nor how to construct them. However the details of the construction may be found in \cite{H,BHS}.
\end{remark}

\begin{theorem}
\label{thm:main}%
Let $p:E\to B$ be a locally sectionable continuous map of topological spaces, and $S$ a subset of $B$ such that the inverse image $E\times_BE\times_BE\times_BS$ of $S$ under the canonical map $E\times_BE\times_BE\to B$ meets every path-component of $E\times_BE\times_BE$. Then there  is a coequalizer diagram in the category of groupoids
\begin{equation*}
\xymatrix{\pi_1(E\times_BE,S)\ar@<0.5ex>[r]^-{\alpha}\ar@<-0.5ex>[r]_-{\beta}&\pi_1(E,S)\ar[r]^-{\gamma}&\pi_1(B,S)},
\end{equation*}
in which the functor $\gamma$ is induced by $p:E \to B$ and the functors $\alpha$, $\beta$  are induced  respectively by the first and the second projections $p_1, p_2:E\times_BE\to E$.
\end{theorem}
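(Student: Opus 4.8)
The plan is to verify directly that $\gamma$, together with the parallel pair $(\alpha,\beta)$, has the universal property of a coequaliser in the category of groupoids; as the Remark points out, this requires no knowledge of how such coequalisers are built. The equation $\gamma\alpha=\gamma\beta$ is immediate, since $p\circ p_1=p\circ p_2$ as maps $E\times_B E\to B$ and $\pi_1$ is a functor. Everything else reduces to this: given a groupoid $G$ and a functor $\phi\colon\pi_1(E,S)\to G$ with $\phi\alpha=\phi\beta$, there is a unique functor $\psi\colon\pi_1(B,S)\to G$ with $\psi\gamma=\phi$. Two preliminary consequences of the hypothesis on $S$ are worth isolating: by pushing a path-component of $E\times_B E$ into $E\times_B E\times_B E$ along $(e_1,e_2)\mapsto(e_1,e_2,e_2)$, applying the hypothesis there, and projecting back, one sees that $E\times_B E\times_B S$ meets every path-component of $E\times_B E$; likewise $E\times_B S$ meets every path-component of $E$. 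Also $p$ is surjective, since it admits local sections.

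For the construction of $\psi$: on objects, each $s\in S$ lifts to some $e\in E$, and I set $\psi(s):=\phi(e)$; if $e'$ is another lift then $(e,e')$ is an object of $\pi_1(E\times_B E,S)$ over $s$, so $\phi(e)=\phi\alpha(e,e')=\phi\beta(e,e')=\phi(e')$ and $\psi$ is well defined on objects. On morphisms, fix an open cover $(U_i)$ of $B$ carrying continuous sections $\sigma_i$ of $p$. Given a path $w$ from $s$ to $s'$, the Lebesgue number lemma provides a subdivision $0=t_0<\dots<t_n=1$ with each $w([t_{j-1},t_j])$ inside some $U_{i_j}$; put $v_j:=\sigma_{i_j}\circ w|_{[t_{j-1},t_j]}$, a lift in $E$ of the $j$-th piece. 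The $v_j$ need not match end to end, and the breakpoints $w(t_j)$ need not lie in $S$: this is where connectivity enters. For each interior breakpoint I take the pair $(v_j(1),v_{j+1}(0))\in E\times_B E$ (both over $w(t_j)$), choose a path in $E\times_B E$ from it to an object $(a_j,a_j')$ of $\pi_1(E\times_B E,S)$, and write its two coordinates as paths $r_j^1,r_j^2$ in $E$ with $p\circ r_j^1=p\circ r_j^2$. This yields a chain of morphisms of $\pi_1(E,S)$, namely $[v_1\ast r_1^1],\ [\overline{r_1^2}\ast v_2\ast r_2^1],\ \dots,\ [\overline{r_{n-1}^2}\ast v_n]$, whose consecutive endpoints $a_j,a_j'$ satisfy $\phi(a_j)=\phi(a_j')$ by the same argument used for objects; I define $\psi[w]$ to be the composite in $G$ of the $\phi$-images of these morphisms. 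Applying $p$ and using $p\circ r_j^1=p\circ r_j^2$ shows the underlying path of the chain is homotopic rel endpoints to $w$.

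The main obstacle is to prove that $\psi$ is well defined on morphisms: independent of the subdivision, of the sections $\sigma_{i_j}$, of the correcting paths $r_j$, and of the representative of the homotopy class of $w$. Independence of the one-dimensional choices is handled by passing to common refinements of subdivisions and to common members of the cover; each such reduction produces precisely a second lift of one path in $B$, and two lifts of a path form a path in $E\times_B E$ between objects of $\pi_1(E\times_B E,S)$, on which $\alpha$ and $\beta$ return the two lifts, so $\phi$ gives the same value on both. The essential point is invariance under a homotopy $H\colon[0,1]^2\to B$ relative to endpoints: I would subdivide the square finely enough that each small square maps into some $U_i$, lift each small square through the corresponding section, and reconcile the competing lifts along shared edges and around interior vertices. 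This reconciliation is exactly the two-dimensional combinatorial step that consumes the full strength of the three-fold condition on $S$, carried out as in \cite{BS}; it is the technical heart of the argument.

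It remains to check that $\psi$ is a functor and that it does the job. A constant path lifts to a constant path, giving an identity; and preservation of composites follows by computing $\psi$ of a concatenation with a subdivision adapted to the joining point. For $\psi\gamma=\phi$: on objects it is the definition of $\psi$, and on a morphism $[u]$ of $\pi_1(E,S)$ one compares $u$ directly with a chain-lift of $p\circ u$ by the same pair-of-lifts argument. Finally, uniqueness of $\psi$ follows from the homotopy recorded at the end of the second paragraph: it shows every morphism of $\pi_1(B,S)$ is a composite of $\gamma$-images of morphisms of $\pi_1(E,S)$, so any $\psi$ with $\psi\gamma=\phi$ is forced, on objects and on morphisms, by $\phi$. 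This establishes the universal property and hence the coequaliser.
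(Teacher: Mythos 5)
Your overall strategy is the one the paper uses: verify the universal property directly, by subdividing a path in $B$, lifting each piece through a local section, and splicing the mismatched endpoints together with correcting paths in $E\times_B E$ that run into the fibre over $S$ (the paper's ``weighted paths''). The object-level argument, the composability of the chain, the identity $\gamma(h_n)\cdots\gamma(h_1)=[f]$, and the uniqueness argument all match the paper. However, there is a genuine gap at the point you yourself flag as the main obstacle, and it is located in a different place than you think. You claim that independence of the subdivision and of the chosen sections reduces to: ``two lifts of a path form a path in $E\times_B E$ between objects of $\pi_1(E\times_B E,S)$, on which $\alpha$ and $\beta$ return the two lifts.'' This is false as stated: if $v_j=\sigma_{i_j}\circ w|_{[t_{j-1},t_j]}$ and $v_j'=\sigma_{i_j'}\circ w|_{[t_{j-1},t_j]}$ are two lifts of the $j$-th piece, the path $\langle v_j,v_j'\rangle$ in $E\times_B E$ has endpoints over the breakpoints $w(t_{j-1})$ and $w(t_j)$, which in general do not lie in $S$; so it is not a morphism of $\pi_1(E\times_B E,S)$ and $\phi\alpha=\phi\beta$ cannot be applied to it. To compare the two chains you must reconcile, at each breakpoint $w(t_j)$, \emph{three} points of $E$ lying over it (the two competing lifts together with the adjacent piece's lift, or the lifts coming from the two interleaved subdivisions) and their respective correcting paths. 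The only way to make the various pairwise correcting paths coherent is to choose a single path in $E\times_B E\times_B E$ from that triple to the fibre over $S$ and take its coordinate pairs as the correcting paths; this is exactly where the hypothesis that $E\times_B E\times_B E\times_B S$ meets every path-component of $E\times_B E\times_B E$ is consumed (the paper's Lemma \ref{thm:Delta-Identity2}, and again in the verification of $\varepsilon\gamma=\delta$, where the triple $(s_{U_i}f(t_i),e(t_i),s_{U_{i+1}}f(t_i))$ appears). Before that one also needs independence of the correcting paths themselves for a fixed subdivision (the paper's Lemma \ref{thm:Delta-Identity1}); that one \emph{is} a legitimate ``two endpoints over $S$'' argument, via $[g_k][g_k']^{-1}\in\pi_1(E\times_B E,S)$, but it does not subsume the subdivision/section step.

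Relatedly, you attribute ``the full strength of the three-fold condition'' to the two-dimensional reconciliation in the homotopy-invariance step, carried out ``as in \cite{BS}.'' In the paper's organization that step is the easy one: a homotopy compatible with a fixed subdivision and fixed opens only produces paths $H_i(t)=(s_{U_i}H(t_i,t),s_{U_{i+1}}H(t_i,t))$ in the \emph{double} product, and the general case is reduced to this by a Lebesgue-number subdivision of the square combined with the (triple-product) subdivision-independence lemma. So the plan is sound and essentially the paper's, but the step that actually needs $E\times_B E\times_B E$ is the one for which you have supplied an argument that does not work, and the step to which you assign the difficulty can be done with less.
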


Note that our assumption on path-components of $E\times_BE\times_BE$ implies similar assumptions on path-connected components of $E$ and of $E\times_BE$.\vspace{2.5mm}

The rest of this section, where we will use notation and assumptions of Theorem \ref{thm:main}, will be devoted to its proof.
\vspace{2.5mm}

Let $\mathcal{U}$ be the set of all open subsets $U$ of $B$ such that the map $p^{-1}(U)\to U$ induced by $p$ has a continuous section, and let us fix a $\mathcal{U}$-indexed family
\begin{equation*}
(s_U:U\to p^{-1}(U))_{U\in \mathcal{U}}
\end{equation*}	
of such sections. After that we introduce

\begin{definition}
\label{def:wpath}
A weighted path (in $(B,S)$) is a system $(f,n,\underline{t},\underline{U},\underline{g})$, in which:
\begin{enumerate}[$\bullet$]
	\item $f:[0,1]\to B$ is a path in $B$ with $f(0)$ and $f(1)$ in $S$;
	\item $n$ is a (non-zero) natural number;
	\item $\underline{t}=(t_0,\ldots,t_n)$ is a sequence of $n+1$ real numbers with $0=t_0<t_1<\ldots<t_n=1$;
	\item $\underline{U}=(U_1,\ldots,U_n)$ is a sequence of $n$ open sets from $\mathcal{U}$ with $f([t_{i-1},t_i])\subseteq U_i$, or, equivalently $[t_{i-1},t_i]\subseteq f^{-1}(U_i)$, for each $i=1,\ldots,n$; the map $[t_{i-1},t_i]\to U_i$ induced by $f$ will be denoted by $f_{\underline{U},i}$, or simply by $f_i$, if there is no danger of confusion;
	\item $\underline{g}=(g_1,\ldots,g_{n-1})$ is a sequence of $n-1$ paths $[0,1]\to E\times_BE$ with
	\begin{equation*}
	g_i(0)=(s_{U_i}f(t_i),s_{U_{i+1}}f(t_i))
	\end{equation*}
	and $g_i(1)$ in the inverse image $(E\times_BE)\times_BS$ of $S$ under the canonical map $E\times_BE\to B$, for each $i=1,\ldots,n-1$.
\end{enumerate}
We will also say that $(n,\underline{t},\underline{U},\underline{g})$ is a weight of $f$.
\end{definition}

From the definition of $\mathcal{U}$, the fact that the inverse image $E\times_BE\times_BS$ of $S$ under the canonical map $E\times_BE\to B$ meets every path-component of $E\times_BE$, and the fact that $[0,1]$ is a compact space, we obtain:

\begin{lemma}
\label{lem:weight}
	Every path $f:[0,1]\to B$ in $B$, with $f(0)$ and $f(1)$ in $S$, has a weight. Moreover, for every non-zero natural number $n$ and every sequence of $n+1$ real numbers with $0=t_0<t_1<\ldots<t_n=1$ and each $f([t_{i-1},t_i])$ $(i=1\ldots n)$ in some $U\in\mathcal{U}$, a weight $(n,\underline{t},\underline{U},\underline{g})$ for $f$ can be chosen with the same $n$ and $\underline{t}=(t_0,\ldots,t_n)$, and even with each $U_i$ chosen in advance.
\end{lemma}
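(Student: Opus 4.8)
The plan is to reduce the first, unconditional, assertion to the ``moreover'' clause, and then to prove that clause by constructing the combinatorial data $(n,\underline{t},\underline{U})$ and the connecting paths $\underline{g}$ separately.

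First I would observe that the first assertion follows from the second. Since $p$ is locally sectionable, every point of $B$ lies in some member of $\mathcal{U}$, so $\mathcal{U}$ is an open cover of $B$ and $(f^{-1}(U))_{U\in\mathcal{U}}$ is an open cover of the compact interval $[0,1]$. Applying the Lebesgue number lemma, I would extract a natural number $n$ and a subdivision $0=t_0<t_1<\dots<t_n=1$ such that each $[t_{i-1},t_i]$ lies in some $f^{-1}(U)$, equivalently such that each $f([t_{i-1},t_i])$ lies in some $U\in\mathcal{U}$. This shows that a subdivision of the type hypothesized in the ``moreover'' clause always exists, so it is enough to prove that clause.

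Then, given such $n$ and $\underline{t}$, I would choose (arbitrarily, subject only to $f([t_{i-1},t_i])\subseteq U_i$) sets $U_1,\dots,U_n\in\mathcal{U}$ --- this is precisely the asserted freedom to fix the $U_i$ in advance --- and construct the paths $g_i$ for $i=1,\dots,n-1$ as follows. Since $t_i\in[t_{i-1},t_i]\cap[t_i,t_{i+1}]$, we have $f(t_i)\in U_i\cap U_{i+1}$, so $s_{U_i}f(t_i)$ and $s_{U_{i+1}}f(t_i)$ are both defined and both map to $f(t_i)$ under $p$; hence $(s_{U_i}f(t_i),s_{U_{i+1}}f(t_i))$ is a genuine point of $E\times_BE$. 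Invoking the consequence, recorded just after the statement of Theorem \ref{thm:main}, that $E\times_BE\times_BS$ meets every path-component of $E\times_BE$, I would pick a path $g_i\colon[0,1]\to E\times_BE$ from $(s_{U_i}f(t_i),s_{U_{i+1}}f(t_i))$ to a point of $E\times_BE\times_BS$. The resulting system $(f,n,\underline{t},\underline{U},\underline{g})$ then meets every clause of Definition \ref{def:wpath}, which finishes the argument.

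I do not expect a real obstacle here; the two points that need a little care are the verification that $(s_{U_i}f(t_i),s_{U_{i+1}}f(t_i))$ genuinely lies in the fibre product $E\times_BE$ (which is exactly where $f(t_i)\in U_i\cap U_{i+1}$ is used), and the appeal to the path-component condition on $E\times_BE$, which is the already-noted consequence of the hypothesis of Theorem \ref{thm:main}; the subdivision step is the routine compactness argument.
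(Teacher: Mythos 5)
Your proposal is correct and is exactly the argument the paper intends: the paper gives no separate proof of this lemma, only the prefatory remark that it follows from the definition of $\mathcal{U}$, the compactness of $[0,1]$, and the fact that $(E\times_BE)\times_BS$ meets every path-component of $E\times_BE$, which is precisely your Lebesgue-number subdivision plus the choice of the connecting paths $g_i$. Your two points of care (that $(s_{U_i}f(t_i),s_{U_{i+1}}f(t_i))$ lies in the fibre product, and the appeal to the path-component condition) are the right ones.
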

	
Our next definition and the following lemmas show why weights are useful. Here and below, the morphism (in a suitable fundamental groupoid) determined by a path $f$ will be denoted by $[f]$ (not to be confused with, say, $[0,1]$ or $[t_{i-1},t_i]$). Moreover, we will do same when the domain of $f$ is not  just $[0,1]$ but any $[t,t']$ with $t<t'$.

\begin{definition}
\label{def:assocseq}%
For a weighted path $(f,n,\underline{t},\underline{U},\underline{g})$, the associated sequence of morphisms in $\pi_1(E,S)$ is the sequence $(h_n,\ldots,h_1)$ in which:
	\begin{enumerate}[$\bullet$]
		\item $h_1=[p_1g_1][s_{U_1}f_1]$;
		\item $h_i=[p_1g_i][s_{U_i}f_i][p_2g_{i-1}]^{-1}$, if $2\leqslant i\leqslant n-1$;
		\item $h_n=[s_{U_n}f_n][p_2g_{n-1}]^{-1}$.
	\end{enumerate}
	Here we use composition of maps inside square brackets and composition of morphisms in $\pi_1(E)$ outside them (in the order used in category theory).
\end{definition}

The following diagrams in the groupoid $\pi_1(E)$ (which are not diagrams in $\pi_1(E,S)$) help to understand this definition:

\begin{equation*}
\xymatrix{s_{U_1}f(0)=s_{U_1}f(t_0)\ar[r]^-{[s_{U_1}f_1]}&s_{U_1}f(t_1)\ar[r]^{[p_1g_1]}&p_1g_1(1),}
\end{equation*}
\begin{equation*}
\xymatrix{p_2g_{i-1}(1)&s_{U_i}f(t_{i-1})\ar[l]_-{[p_2g_{i-1}]}\ar[r]^-{[s_{U_i}f_i]}&s_{U_i}f(t_i)\ar[r]^-{[p_1g_i]}&p_1g_i(1),}
\end{equation*}
\begin{equation*}
\xymatrix{p_2g_{n-1}(1)&s_{U_n}f(t_{n-1})\ar[l]_-{[p_2g_{n-1}]}\ar[r]^-{[s_{U_n}f_n]}&s_{U_n}f(t_n)=s_nf(1).}
\end{equation*}

Let $G$ be an arbitrary groupoid and $\delta:\pi_1(E,S)\to G$ an arbitrary functor with $\delta\alpha=\delta\beta$.

\begin{lemma}
\label{lem:comp}
	Let $(h_n,\ldots,h_1)$ be as in Definition \ref{def:assocseq}. Then:
	\begin{enumerate}[$\bullet$]
		\item [(a)] The sequence $(\delta(h_n),\ldots,\delta(h_1))$ of morphisms in $G$ is composable; in particular this applies to $\delta=\gamma$, making $(\gamma(h_n),\ldots,\gamma(h_1))$ a composable sequence of morphisms in $\pi_1(B,S)$.
		\item [(b)] $\gamma(h_n)\ldots\gamma(h_1)=[f]$.
	\end{enumerate}
\end{lemma}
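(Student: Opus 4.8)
\textbf{Proof plan for Lemma \ref{lem:comp}.}

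The plan is to read off both statements directly from the three diagrams displayed after Definition \ref{def:assocseq}, once we understand how the hypothesis $\delta\alpha=\delta\beta$ interacts with the $g_i$. For part (a), the key observation is that composability of $(\delta(h_n),\ldots,\delta(h_1))$ is a statement about the \emph{sources and targets} of the morphisms $\delta(h_i)$ in $G$. From the diagrams, the target of $h_i$ (for $1\le i\le n-1$) is the point $p_1g_i(1)$ of $E$, while the source of $h_{i+1}$ is the point $p_2g_i(1)$. These need not be equal in $E$, so $(h_n,\ldots,h_1)$ is typically \emph{not} composable in $\pi_1(E,S)$ — this is exactly why the diagrams are flagged as living in $\pi_1(E)$ rather than $\pi_1(E,S)$. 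However, the path $g_i:[0,1]\to E\times_BE$ has its endpoint $g_i(1)$ in $(E\times_BE)\times_BS$, hence $p_1g_i(1)$ and $p_2g_i(1)$ lie in $S$ and are connected by the paths $p_1g_i$ and $p_2g_i$ in $E$; moreover $pp_1g_i=pp_2g_i$, so $[p_1g_i]$ and $[p_2g_i]$ are carried by $\alpha$ and $\beta$ (applied to $[g_i]\in\pi_1(E\times_BE,S)$) to the same morphism in $\pi_1(B,S)$ — and hence, by $\delta\alpha=\delta\beta$, to the same morphism under $\delta$ after composing with $\gamma$. The clean way to phrase this: $\alpha[g_i]$ is the class of $p_1g_i$ and $\beta[g_i]$ the class of $p_2g_i$ in $\pi_1(E,S)$, so $\delta\big(\alpha[g_i]\big)=\delta\big(\beta[g_i]\big)$ forces the target of $\delta(h_i)$ to agree with the source of $\delta(h_{i+1})$ in $G$. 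One checks the boundary cases $i=1$ and $i=n$ separately using the first and third displayed diagrams; there the outer endpoints are $s_{U_1}f(0)$ and $s_{U_n}f(1)$, which lie in $E\times_BS$ since $f(0),f(1)\in S$.

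For part (b), specialize $\delta=\gamma$, so that $G=\pi_1(B,S)$ and the morphisms $\gamma(h_i)$ become genuinely composable by part (a). Now apply $\gamma$ to each factor in Definition \ref{def:assocseq} and use functoriality together with the identity $\gamma[p_1g_i]=[pp_1g_i]=[pp_2g_i]=\gamma[p_2g_i]$ (which holds because $\gamma$ is induced by $p$, and $p\circ p_1=p\circ p_2$ on $E\times_BE$). In the telescoping product $\gamma(h_n)\cdots\gamma(h_1)$, every factor $[pp_1g_i]$ coming from $h_i$ cancels against the factor $[pp_2g_i]^{-1}=[pp_1g_i]^{-1}$ coming from $h_{i+1}$, and what survives is
\begin{equation*}
\gamma(h_n)\cdots\gamma(h_1)=[p s_{U_n}f_n]\,[p s_{U_{n-1}}f_{n-1}]\cdots[p s_{U_1}f_1].
\end{equation*}
Since $s_U$ is a section of $p$ over $U$, we have $p\circ s_{U_i}\circ f_i = f|_{[t_{i-1},t_i]}$ for each $i$, so the right-hand side is the composite in $\pi_1(B,S)$ of the consecutive restrictions of $f$ to the subintervals $[t_{i-1},t_i]$. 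By the elementary subdivision property of the fundamental groupoid, that composite is exactly $[f]$.

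The only genuinely delicate point is bookkeeping: keeping straight the variance convention (``order used in category theory'' for composition outside brackets versus ordinary composition of maps inside), and making sure the $p_1$-vs-$p_2$ pattern in Definition \ref{def:assocseq} matches the $g_i(0)=(s_{U_i}f(t_i),s_{U_{i+1}}f(t_i))$ convention in Definition \ref{def:wpath} so that the cancellation in (b) really is a clean telescope. No serious obstacle is expected beyond this; (a) is a pure source/target computation and (b) is a one-line telescoping argument once (a) is in place.
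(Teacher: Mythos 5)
Your proposal is correct and follows essentially the same route as the paper: part (a) is a source/target computation rescued by $\delta\alpha=\delta\beta$ at the endpoints of the $g_i$, and part (b) is the telescoping cancellation of $[pp_1g_i]=[pp_2g_i]$ after pushing everything down to $\pi_1(B)$ via $ps_{U_i}=\mathrm{id}$. The one refinement worth making in (a): $[g_i]$ is \emph{not} a morphism of $\pi_1(E\times_BE,S)$ (its source $g_i(0)=(s_{U_i}f(t_i),s_{U_{i+1}}f(t_i))$ need not lie over $S$, and likewise $p_1g_i(1),p_2g_i(1)$ lie in $p^{-1}(S)$ rather than in $S$), so instead of applying $\delta\alpha=\delta\beta$ to the morphism $[g_i]$ you should apply it only to the \emph{object} $g_i(1)\in(E\times_BE)\times_BS$, which yields exactly the needed identification $\delta(p_1g_i(1))=\delta(p_2g_i(1))$ of the codomain of $\delta(h_i)$ with the domain of $\delta(h_{i+1})$ --- this is precisely how the paper phrases it.
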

\begin{proof}
(a): We need to show that the domain of $\delta(h_{i+1})$ is the same as the codomain of $\delta(h_i)$, for every $i=1,\ldots,n-1$. Writing $\mathsf{dom}$ and $\mathsf{cod}$ for domain and codomain, respectively, we have:\vspace{1mm}
\begin{equation*}
\begin{array}{rcl}
\mathsf{dom}(\delta(h_{i+1})) & = & \delta(\mathsf{dom}(h_{i+1})) \\
	& = & \delta(\mathsf{dom}([p_1g_{i+1}][s_{U_{i+1}}f_{i+1}][p_2g_i]^{-1})) \\
	& \overset{\text{(i})}{=} & \delta(\mathsf{cod}([p_2g_i])) \\
	& \overset{\text{(ii)}}{=} & \delta(p_2g_i(1))=\delta(p_1g_i(1)) \\
	& = & \delta(\mathsf{cod}([p_1g_i]))=\delta(\mathsf{cod}([p_1g_i][s_{U_i}f_i][p_2g_{i-1}]^{-1}))=\delta(\mathsf{cod}(h_i))\ ,
\end{array}
\end{equation*}	
where equality (i) holds because all calculations inside $\pi_1(E,S)$ can equivalently be made inside $\pi_1(E)$; and equality (ii) holds since $(p_1g_i(1),p_2g_i(1))=g_i(1)$ belongs to $E\times_BE$ and $\delta\alpha=\delta\beta$.

\medskip\noindent
(b): Here we in fact need a calculation in $\pi_1(B,S)$, in which we can replace $\gamma:\pi_1(E,S)\to\pi_1(B,S)$ with $\pi_1(p):\pi_1(E)\to\pi_1(B)$. This allows us to present $\gamma(h_n)\ldots\gamma(h_1)$ as the composite of
	\begin{equation*}
	\xymatrix{f(0)=f(t_0)\ar[r]^-{[f_1]}&f(t_1)\ar[r]^-{[pp_1g_1]}&\ldots}
	\end{equation*}
	\begin{equation*}
	\xymatrix{\ldots\ar[r]^-{[pp_2g_{i-1}]^{-1}}& f(t_{i-1})\ar[r]^-{[f_i]}&f(t_i)\ar[r]^-{[pp_1g_i]}&pp_1g_i(1)=pp_2g_i(1)\ar[r]^-{[pp_2g_i]^{-1}}&f(t_i)\ar[r]^-{[f_{i+1}]}&f(t_{i+1})\ar[r]^-{[pp_1g_{i+1}]}&\ldots}
	\end{equation*}
	\begin{equation*}
	\xymatrix{\ldots\ar[r]^-{[pp_2g_{n-1}]^{-1}}&f(t_{n-1})\ar[r]^-{[f_n]}&f(t_n)=f(1),}
	\end{equation*}
	where we use the fact that, for each $i=1,\ldots,n$, the image of $[s_{U_i}f_i]:s_{U_i}f(t_{i-1})\to s_{U_i}f(t_i)$ under $\pi_1(p)$ is $[f_i]:f(t_{i-1})\to f(t_i)$ (since $s_U$ $(U\in\mathcal{U})$ are local sections of $p$). After that,  note also that $[pp_1g_i]=[pp_2g_i]$, and makes the composite above equal to $[f_n]\ldots[f_1]=[f]$.  	
\end{proof}

\begin{lemma}
\label{thm:Delta-Identity1}
Let $(f,n,\underline{t},\underline{U},\underline{g})$ and $(f,n,\underline{t},\underline{U},\underline{g'})$ be weighted paths and $(h_n,\ldots,h_1)$ and $(h'_n,\ldots,h'_1)$ their associated sequences of morphisms in $\pi_1(E,S)$. Then
\begin{equation*}
\delta(h_n)\ldots\delta(h_1)=\delta(h'_n)\ldots\delta(h'_1)\ .
\end{equation*}
\end{lemma}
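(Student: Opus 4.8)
The plan is to show that the two associated sequences differ only by "insertions" along the paths $g_i$ and $g_i'$, and that $\delta$ collapses each such difference because $\delta\alpha=\delta\beta$. Concretely, I would compare the weighted paths $(f,n,\underline t,\underline U,\underline g)$ and $(f,n,\underline t,\underline U,\underline g')$ through a sequence of intermediate weighted paths in which one gluing path $g_i$ is changed to $g_i'$ at a time, so by transitivity of equality it suffices to treat the case where $\underline g$ and $\underline g'$ agree except in a single coordinate $i_0$ (with $1\le i_0\le n-1$). In that case only $h_{i_0}$ and $h_{i_0+1}$ (or their boundary analogues when $i_0=1$ or $i_0=n-1$) differ from $h_{i_0}'$ and $h_{i_0+1}'$; all other $h_j=h_j'$. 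So the claim reduces to the local identity
\begin{equation*}
\delta(h_{i_0+1})\,\delta(h_{i_0})=\delta(h_{i_0+1}')\,\delta(h_{i_0}')
\end{equation*}
(with the evident modification at the ends).

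Next I would expand both sides using Definition \ref{def:assocseq}. The middle factor $[s_{U_{i_0}}f_{i_0}]$ (resp.\ $[s_{U_{i_0+1}}f_{i_0+1}]$) and the factors $[p_2g_{i_0-1}]^{-1}$, $[p_1g_{i_0+1}]$ coming from the unchanged neighbours are common to both sides, so after cancelling them it remains to prove
\begin{equation*}
\delta([p_1g_{i_0}])\,\delta([p_2g_{i_0}])^{-1}=\delta([p_1g_{i_0}'])\,\delta([p_2g_{i_0}'])^{-1}
\end{equation*}
as morphisms in $G$ — that is, $\delta([p_1g_{i_0}]\,[p_2g_{i_0}]^{-1})=\delta([p_1g_{i_0}']\,[p_2g_{i_0}']^{-1})$. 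Here is where $\delta\alpha=\delta\beta$ enters decisively. The paths $g_{i_0}$ and $g_{i_0}'$ are both paths in $E\times_BE$ starting at the same point $(s_{U_{i_0}}f(t_{i_0}),s_{U_{i_0+1}}f(t_{i_0}))$ and ending in $(E\times_BE)\times_BS$. Thus $[g_{i_0}]$ and $[g_{i_0}']$ are morphisms of $\pi_1(E\times_BE,S)$ (after appending a reference to a chosen base point in each path-component, using the hypothesis that $S$ meets every path-component — but in fact we only need the morphism $[g_{i_0}]\,[g_{i_0}']^{-1}$ of $\pi_1(E\times_BE,S)$, which is well defined once both endpoints are checked to lie over $S$, the starting points being equal). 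Applying $\alpha=\pi_1(p_1)$ to $[g_{i_0}]\,[g_{i_0}']^{-1}$ gives $[p_1g_{i_0}]\,[p_1g_{i_0}']^{-1}$ and applying $\beta=\pi_1(p_2)$ gives $[p_2g_{i_0}]\,[p_2g_{i_0}']^{-1}$; since $\delta\alpha=\delta\beta$, their images under $\delta$ coincide, which after rearranging the (automatically composable) factors — justified exactly as in the proof of Lemma \ref{lem:comp}(a) — is precisely the displayed equality.

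For the two boundary cases I would argue identically: when $i_0=1$, $h_1=[p_1g_1][s_{U_1}f_1]$ has no $[p_2g_0]^{-1}$ factor, but the term that changes is still the shared-cancellable $[p_1g_1]$ versus $[p_1g_1']$ against the opposite end of $h_2$, and the same application of $\delta\alpha=\delta\beta$ to $[g_1]\,[g_1']^{-1}$ does the job; the case $i_0=n-1$ is symmetric, using $h_n=[s_{U_n}f_n][p_2g_{n-1}]^{-1}$. The only real subtlety — and the step I expect to need the most care — is the bookkeeping of which factors are composable with which, so that the rearrangement "pull the $\delta\alpha([g_{i_0}]\,[g_{i_0}']^{-1})=\delta\beta([g_{i_0}]\,[g_{i_0}']^{-1})$ identity into the middle of the word" is legitimate in the groupoid $G$; this is routine but must be checked using that all the relevant source/target matchings hold already in $\pi_1(E)$ (hence after applying any functor), exactly as in Lemma \ref{lem:comp}(a). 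No genuinely new idea beyond $\delta\alpha=\delta\beta$ is required.
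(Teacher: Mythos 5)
Your proposal is correct in substance and follows the paper's own proof: the same reduction (by transitivity) to the case where $\underline{g}$ and $\underline{g'}$ differ in a single coordinate $k$, the same observation that only $h_k$ and $h_{k+1}$ change, and the same decisive step, namely applying $\delta\alpha=\delta\beta$ to the morphism $[g_k][g'_k]^{-1}$ of $\pi_1(E\times_BE,S)$ and inserting the resulting identity into the word $\delta(h_{k+1})\delta(h_k)$. One local mis-statement should be repaired, though: the displayed intermediate reduction to $\delta([p_1g_{i_0}])\,\delta([p_2g_{i_0}])^{-1}=\delta([p_1g'_{i_0}])\,\delta([p_2g'_{i_0}])^{-1}$ is not literally meaningful. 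The functor $\delta$ is only defined on $\pi_1(E,S)$, and the individual factors $[p_1g_{i_0}]$, $[p_2g_{i_0}]$ have the endpoint $g_{i_0}(0)$, which need not lie over $S$, so one cannot ``cancel'' common factors one at a time inside $\delta$; moreover, even in $\pi_1(E)$ the composite $[p_1g_{i_0}][p_2g_{i_0}]^{-1}$ is not composable, since its would-be matching endpoints are $s_{U_{i_0}}f(t_{i_0})$ and $s_{U_{i_0+1}}f(t_{i_0})$, in general distinct points of the fibre over $f(t_{i_0})$. The correct pivot --- which your very next sentence produces and which is exactly the paper's step (c) --- compares the two choices of $g$ through the \emph{same} projection: $\delta([p_1g_{i_0}][p_1g'_{i_0}]^{-1})=\delta([p_2g_{i_0}][p_2g'_{i_0}]^{-1})$, both sides being honest morphisms of $\pi_1(E,S)$ because $g_{i_0}(0)=g'_{i_0}(0)$ and both terminal points lie over $S$. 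With that substitution, your ``rearrangement'' is precisely the paper's computation $\delta(h_{k+1})\delta(h_k)=\delta\bigl(h_{k+1}\cdot[p_2g_k][p_2g'_k]^{-1}\bigr)\,\delta\bigl([p_1g'_k][p_1g_k]^{-1}\cdot h_k\bigr)=\delta(h'_{k+1})\delta(h'_k)$, and the composability bookkeeping you flag amounts to checking that these insertions occur at the object $\delta\alpha(g_k(1))=\delta\beta(g_k(1))$. So: no missing idea, same route as the paper, but replace the ill-formed display by the same-projection comparison.
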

\begin{proof}
	We can assume, without loss of generality, $g_i=g'_i$ for all $i=1,\ldots,n-1$ except $i=k$, for some $k$. Assuming that we only need to prove the equality $\delta(h_{k+1})\delta(h_k)=\delta(h'_{k+1})\delta(h'_k)$. There are three cases to consider, $k=1$, $2\leqslant k\leqslant n-2$, and $k=n-1$, but we will consider only second case (since two other cases can obviously be treated similarly). We have:
\begin{enumerate}[$\bullet$]
		\item [(a)] $\delta([p_1g_k][p_1g'_k]^{-1})=\delta\alpha([g_k][g'_k]^{-1}):\delta\alpha(g'_k(1))\to\delta\alpha(g_k(1))$ in $G$, where, although we compose morphisms in $\pi_1(E)$ and $\pi_1(E\times_BE)$, the results $[p_1g_k][p_1g'_k]^{-1}$ and $[g_k][g'_k]^{-1}$ belong to $\pi_1(E,S)$ and $\pi_1(E\times_BE,S)$, respectively;
		\item [(b)] similarly, $\delta([p_2g_k][p_2g'_k]^{-1})=\delta\beta([g_k][g'_k]^{-1}):\delta\beta(g'_k(1))\to\delta\beta(g_k(1))$ in $G$;
		\item [(c)] since $\delta\alpha=\delta\beta$, (a) and (b) imply $\delta([p_1g_k][p_1g'_k]^{-1})=\delta([p_2g_k][p_2g'_k]^{-1})$;
		\item [(d)] In $G$ we compute:
\begin{equation*}
\scalebox{0.885}{$\delta([p_2g_k][p_2g'_k]^{-1})\delta([p_1g'_k][p_1g_k]^{-1})=\delta([p_2g_k][p_2g'_k]^{-1})(\delta([p_1g_k][p_1g'_k]^{-1}))^{-1}=1_{\delta\alpha(g_k(1))}=1_{\delta\beta(g_k(1))}$}
\end{equation*}		
\end{enumerate}
Using (d) we calculate:
\begin{equation*}
\scalebox{0.9}{$\begin{array}{rcl}
\delta(h_{k+1})\delta(h_k) & = & \delta([p_1g_{k+1}][s_{U_{k+1}}f_{k+1}][p_2g_k]^{-1})\delta([p_1g_k][s_{U_k}f_k][p_2g_{k-1}]^{-1}) \\
	& = & \delta([p_1g_{k+1}][s_{U_{k+1}}f_{k+1}][p_2g_k]^{-1}[p_2g_k][p_2g'_k]^{-1})\delta([p_1g'_k][p_1g_k]^{-1}[p_1g_k][s_{U_k}f_k][p_2g_{k-1}]^{-1}) \\
	& = & \delta([p_1g_{k+1}][s_{U_{k+1}}f_{k+1}][p_2g'_k]^{-1})\delta([p_1g'_k][s_{U_k}f_k][p_2g_{k-1}]^{-1}) \\
	& = & \delta([p_1g'_{k+1}][s_{U_{k+1}}f_{k+1}][p_2g'_k]^{-1})\delta([p_1g'_k][s_{U_k}f_k][p_2g'_{k-1}]^{-1}) \\
	& = & \delta(h'_{k+1})\delta(h'_k).	
\end{array}$}
\end{equation*}
This completes the proof.
\end{proof}

The following lemma is stronger, but it uses Lemma \ref{thm:Delta-Identity1} in its proof.

\begin{lemma}
\label{thm:Delta-Identity2}%
Let $(f,n,\underline{t},\underline{U},\underline{g})$ and $(f,n',\underline{t'},\underline{U'},\underline{g'})$ be weighted paths and $(h_n,\ldots,h_1)$ and $(h'_{n'},\ldots,h'_1)$ their associated sequences of morphisms in $\pi_1(E,S)$. Then
\begin{equation*}
\delta(h_n)\ldots\delta(h_1)=\delta(h'_{n'})\ldots\delta(h'_1)\ .
\end{equation*}
\end{lemma}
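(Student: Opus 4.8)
The plan is to reduce the statement to Lemma~\ref{thm:Delta-Identity1} by passing to a common refinement of the two subdivisions. To do this I would first establish two stability properties of the composite $\delta(h_{n})\cdots\delta(h_{1})$ attached to a weighted path of $f$.

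\emph{Stability under refinement.} Inserting one new point $t^{*}$ into an interval $[t_{i-1},t_{i}]$ and putting the \emph{same} open set $U_{i}$ (hence the same section $s_{U_{i}}$) on both halves $[t_{i-1},t^{*}]$, $[t^{*},t_{i}]$ does not change the composite: one may choose the new $g$-path at $t^{*}$ of the form $g^{*}=(\rho,\rho)$, where $\rho$ is a path in $E$ from $s_{U_{i}}f(t^{*})$ to a point of $E\times_{B}S$ (which exists since the hypothesis on $E\times_{B}E\times_{B}E$ descends to $E$), and keep all other $g$-paths; since then $[p_{1}g^{*}]=[\rho]=[p_{2}g^{*}]$, the two morphisms arising from $[t_{i-1},t^{*}]$ and $[t^{*},t_{i}]$ already compose inside $\pi_{1}(E,S)$ to the old $h_{i}$. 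Iterating, for every refinement $\underline{t''}\supseteq\underline{t}$ there is a weighted path of $f$ with subdivision $\underline{t''}$, open sets inherited from $\underline{U}$, and a path $(\rho,\rho)$ at each inserted point, with the same composite.

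\emph{Stability under changing one open set} --- the step I expect to be the main obstacle. Suppose two weighted paths of $f$ share the subdivision $\underline{t}$ and, by Lemma~\ref{thm:Delta-Identity1}, may be assumed to agree outside $[t_{k-1},t_{k}]$, on which one uses $(U_{k},s)$ and the other an admissible $(V_{k},s')$; the boundary conditions force $g_{k-1}$, $g_{k}$ to be replaced by new paths $\tilde g_{k-1}$, $\tilde g_{k}$. The trick is to lift these into the triple fibre product: the triples $\bigl(s_{U_{k-1}}f(t_{k-1}),\,sf(t_{k-1}),\,s'f(t_{k-1})\bigr)$ and $\bigl(sf(t_{k}),\,s'f(t_{k}),\,s_{U_{k+1}}f(t_{k})\bigr)$ lie in $E\times_{B}E\times_{B}E$, so the hypothesis of Theorem~\ref{thm:main} provides paths $G_{k-1}$, $G_{k}$ in $E\times_{B}E\times_{B}E$ joining them to points over $S$; with $q_{12},q_{13},q_{23}\colon E\times_{B}E\times_{B}E\to E\times_{B}E$ the three projections, Lemma~\ref{thm:Delta-Identity1} again allows me to take $g_{k-1}=q_{12}G_{k-1}$, $\tilde g_{k-1}=q_{13}G_{k-1}$, $g_{k}=q_{13}G_{k}$, $\tilde g_{k}=q_{23}G_{k}$. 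Then $p_{1}g_{k-1}=p_{1}\tilde g_{k-1}$ and $p_{2}g_{k}=p_{2}\tilde g_{k}$, so $h_{k-1}$ and $h_{k+1}$ are unchanged and only $h_{k}$ versus $\tilde h_{k}$ remains. For these I would build the path $\Phi$ in $E\times_{B}E$ that concatenates the reverse of $q_{23}G_{k-1}$, the path $t\mapsto(sf(t),s'f(t))$ on $[t_{k-1},t_{k}]$, and $q_{12}G_{k}$: the successive endpoints match and the two ends lie over $S$, so $[\Phi]\in\pi_{1}(E\times_{B}E,S)$, and computing the projections shows $[p_{1}\Phi]=h_{k}$, $[p_{2}\Phi]=\tilde h_{k}$. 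Therefore $\delta(h_{k})=\delta\alpha([\Phi])=\delta\beta([\Phi])=\delta(\tilde h_{k})$. (For $k=1$ and $k=n$ one has $f(0),f(1)\in S$, so the relevant pair is already over $S$ and only one of the two lifting paths is needed.)

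\emph{Assembly.} Given $(f,n,\underline{t},\underline{U},\underline{g})$ and $(f,n',\underline{t'},\underline{U'},\underline{g'})$, take the common refinement $\underline{t''}$ of $\underline{t}$ and $\underline{t'}$. By the first stability property, refining each weighted path to the subdivision $\underline{t''}$ with inherited open sets changes neither composite; by the second stability property, applied on each interval of $\underline{t''}$, I may then change the open sets inherited from $\underline{U}$ into those inherited from $\underline{U'}$, still without changing the composite; and the two weighted paths so obtained have the same subdivision and the same open sets, so Lemma~\ref{thm:Delta-Identity1} gives the asserted equality. Note that the hypothesis on $E\times_{B}E\times_{B}E$, rather than only its consequences for $E$ and $E\times_{B}E$, is used precisely in producing the lifting paths $G_{k-1}$, $G_{k}$.
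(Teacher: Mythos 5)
Your argument is correct, and it is organized differently from the paper's proof, in a way worth noting. The paper proves the lemma in one global step: after arranging (via a third weight and Lemma \ref{lem:weight}) that the two subdivisions meet only in $\{0,1\}$, it interleaves them into a single subdivision $\underline{\tilde t}$, chooses lifting paths $r_i$ in $E\times_BE\times_BE$ at every interleaved point, and writes down by an eight-case formula a sequence $\tilde h_1,\ldots,\tilde h_{\tilde n}$ in $\pi_1(E\times_BE,S)$ whose $\delta\alpha$-image telescopes to $\delta(h_n)\cdots\delta(h_1)$ and whose $\delta\beta$-image telescopes to $\delta(h'_{n'})\cdots\delta(h'_1)$. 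You instead factor the comparison into two elementary moves --- refinement of the subdivision with repeated open sets (handled by the diagonal path $\langle\rho,\rho\rangle$, where $[p_1g^*]=[p_2g^*]$ makes the two half-intervals recompose to the old $h_i$ inside $\pi_1(E,S)$) and change of a single open set (handled by your path $\Phi$ in $E\times_BE$ with $[p_1\Phi]=h_k$, $[p_2\Phi]=\tilde h_k$, so that $\delta(h_k)=\delta\alpha[\Phi]=\delta\beta[\Phi]=\delta(\tilde h_k)$) --- and then assembles via the common refinement. The essential ingredients are the same in both proofs: the hypothesis on path-components of $E\times_BE\times_BE$ enters exactly where a section over one interval must be compared simultaneously with two others (your $G_{k-1}$, $G_k$ versus the paper's $r_i$), and the relation $\delta\alpha=\delta\beta$ is what transfers the identity across $\pi_1(E\times_BE,S)$. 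What your route buys is modularity: it avoids both the preliminary disjointness reduction and the long case analysis in the definition of $\tilde h_i$, replacing them with two local lemmas each checked on a single interval; the cost is that you invoke Lemma \ref{thm:Delta-Identity1} repeatedly (once per elementary move) rather than once. One point you leave implicit but which deserves a sentence in a written-up version: after replacing $\delta(h_k)$ by $\delta(\tilde h_k)$ the sequence must remain composable in $G$, which holds because $\delta\alpha[\Phi]$ and $\delta\beta[\Phi]$ are literally the same morphism of $G$ (the same object-level identification $\delta(q_2G_{k-1}(1))=\delta(q_3G_{k-1}(1))$ used in Lemma \ref{lem:comp}(a)).
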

\begin{proof}
We can assume, without loss of generality, that $\{t_0\ldots t_n\}\cap\{t'_0\ldots t'_{n'}\}=\{0,1\}$. Indeed, using the second assertion of Lemma 2.4, we can choose a weight $(n'',\underline{t''},\underline{U''},\underline{g''})$ for $f$ with $\{t_0\ldots t_n\}\cap\{t''_0\ldots t''_{n''}\}=\{0,1\}$ and $\{t'_0\ldots t'_{n'}\}\cap\{t''_0\ldots t''_{n''}\}=\{0,1\}$, and then $\delta(h_n)\ldots\delta(h_1)=\delta(h''_{n''})\ldots\delta(h''_1)$ and $\delta(h'_{n'})\ldots\delta(h'_1)=\delta(h''_{n''})\ldots\delta(h''_1)$ would imply $\delta(h_n)\ldots\delta(h_1)=\delta(h'_{n'})\ldots\delta(h'_1)$ (in obvious notation).
	
	After that let us introduce the following notation:
	\begin{enumerate}[$\bullet$]
		\item $\tilde{n}=n+n'-1$.
		\item $\{\tilde{t}_0,\ldots,\tilde{t}_{\tilde{n}}\}=\{t_0\ldots t_n\}\cup\{t'_0\ldots t'_{n'}\}$.
		\item $f_i$ is the restristion of $f$ on $[\tilde{t}_{i-1},\tilde{t}_i]$; accordingly, we will not abbreviate $f_{\underline{U},i}$ as $f_i$ in this proof. Note that: (a) $f_i=f_{\underline{U},i}$ when $\tilde{t}_i=t_j$ and $\tilde{t}_{i-1}=t_{j-1}$; (b) $f_i=f_{\underline{U'},i}$ when $\tilde{t}_i=t'_j$ and $\tilde{t}_{i-1}=t'_{j-1}$.
		\item
		\begin{equation*}
		\xymatrix{\{1,\ldots,n-1\}\ar@<0.5ex>[r]^\varphi&\{1,\ldots,n'-1\}\ar@<0.5ex>[l]^{\varphi'}}
		\end{equation*}
		are the maps defined by
		\begin{equation*}
		\varphi(i)=\mathrm{min}\{j\in\{1,\ldots,n'\}\,|\,t_i<t'_j\}\,\,\,\mathrm{and}\,\,\,\varphi'(i)=\mathrm{min}\{j\in\{1,\ldots,n\}\,|\,t'_i<t_j\}.	
		\end{equation*}
		\item $q_1$, $q_2$, and $q_3$ are the three projections $E\times_BE\times_BE\to E$.
	\end{enumerate}
    Let us also choose paths $r_i:[0,1]\to E\times_BE\times_BE$ $(i=1,\ldots,\tilde{n}-1)$ with
    \begin{equation*}
	r_i(0)=\left\{
	\begin{array}{l}
	(s_{U_j}f(t_j),s_{U'_{\varphi(j)}}f(t_j),s_{U_{j+1}}f(t_j)),\ \text{if}\ \tilde{t}_i=t_j,\\
	(s_{U'_j}f(t'_j),s_{U_{\varphi'(j)}}f(t'_j),s_{U'_{j+1}}f(t'_j)),\ \text{if}\ \tilde{t}_i=t'_j,
	\end{array}
	\right.		
	\end{equation*}
	and $r_i(1)$ in the inverse image $(E\times_BE\times_BE)\times_BS$ of $S$ under the canonical map
	\begin{equation*}
	E\times_BE\times_BE\to B,
	\end{equation*}
	for each $i=1,\ldots,\tilde{n}-1$; such choices are possible by our assumption on path-components of $E\times_BE\times_BE$ in Theorem \ref{thm:main}.
	
	Using this data we define morphisms $\tilde{h}_1,\ldots,\tilde{h}_{\tilde{n}}$ in $\pi_1(E\times_BE,S)$,  by
	\begin{equation*}
		\resizebox{.98\textwidth}{!}{$
	\tilde{h}_i=\left\{
	\begin{array}{l}
	[\langle q_1r_1,q_2r_1\rangle][\langle s_{U_1}f_1,s_{U'_1}f_1\rangle],\ \text{if}\ i=1\ \text{and}\ \tilde{t}_1=t_1,\\
	\lbrack\langle q_2r_1,q_1r_1\rangle][\langle s_{U_1}f_1,s_{U'_1}f_1\rangle],\ \text{if}\ i=1\ \text{and}\ \tilde{t}_1=t'_1,\\
	\lbrack\langle q_1r_i,q_2r_i\rangle][\langle s_{U_j}f_i,s_{U'_{\varphi(j)}}f_i\rangle][\langle q_3r_{i-1},q_2r_{i-1}\rangle]^{-1},\ \text{if}\ 1<i<\tilde{n},\, \tilde{t}_i=t_j,\, \text{and}\ \tilde{t}_{i-1}=t_{j-1},\\
	\lbrack\langle q_2r_i,q_1r_i\rangle][\langle s_{U_{\varphi'(j)}}f_i,s_{U'_j}f_i\rangle][\langle q_2r_{i-1},q_3r_{i-1}\rangle]^{-1},\ \text{if}\ 1<i<\tilde{n},\, \tilde{t}_i=t'_j,\, \text{and}\ \tilde{t}_{i-1}=t'_{j-1},\\
	\lbrack\langle q_1r_i,q_2r_i\rangle][\langle s_{U_j}f_i,s_{U'_{\varphi(j)}}f_i\rangle][\langle q_2r_{i-1},q_3r_{i-1}\rangle]^{-1},\ \text{if}\ 1<i<\tilde{n},\, \tilde{t}_i=t_j,\, \text{and}\ \tilde{t}_{i-1}=t'_{\varphi(j)-1},\\
	\lbrack\langle q_2r_i,q_1r_i\rangle][\langle s_{U_{\varphi'(j)}}f_i,s_{U'_j}f_i\rangle][\langle q_3r_{i-1},q_2r_{i-1}\rangle]^{-1},\ \text{if}\ 1<i<\tilde{n},\, \tilde{t}_i=t'_j,\, \text{and}\ \tilde{t}_{i-1}=t_{\varphi'(j)-1},\\
	\lbrack\langle s_{U_n}f_{\tilde{n}},s_{U'_{n'}}f_{\tilde{n}}\rangle][\langle q_3r_{\tilde{n}-1},q_2r_{\tilde{n}-1}\rangle],\ \text{if}\ i=\tilde{n},\, \text{and}\ \tilde{t}_{\tilde{n}-1}=t_{n-1},\\
	\lbrack\langle s_{U_n}f_{\tilde{n}},s_{U'_{n'}}f_{\tilde{n}}\rangle][\langle q_2r_{\tilde{n}-1},q_3r_{\tilde{n}-1}\rangle],\ \text{if}\ i=\tilde{n},\, \text{and}\ \tilde{t}_{\tilde{n}-1}=t'_{n-1}.\\
	\end{array}
	\right.	$}
	\end{equation*}
Here we omitted routine calculations to show that the composites involved are well defined in $\pi_1(E\times_BE)$, and similar routine calculations show that
\begin{equation*}
(\delta\alpha(\tilde{h}_{\tilde{n}}),\ldots,\delta\alpha(\tilde{h}_1))=(\delta\beta(\tilde{h}_{\tilde{n}}),\ldots,\delta\beta(\tilde{h}_1))
\end{equation*}
is a composable sequences of morphisms in $G$.

Our aim is to prove that
\begin{equation*}
\delta(h_n)\ldots\delta(h_1)=\delta\alpha(\tilde{h}_{\tilde{n}})\ldots\delta\alpha(\tilde{h}_1)=\delta\beta(\tilde{h}_{\tilde{n}})\ldots\delta\beta(\tilde{h}_1)=\delta(h'_n)\ldots\delta(h'_1),
\end{equation*}
but since the middle equality is trivial, while the first and third one are similar to each other, it suffices to prove the first equality.

Let $\chi:\{0,\ldots,n\}\to\{0,\ldots,\tilde{n}\}$ be the map defined by $\chi(j)=i\Leftrightarrow t_j=\tilde{t}_i$. We have
\begin{equation*}
\delta\alpha(\tilde{h}_{\tilde{n}})\ldots\delta\alpha(\tilde{h}_1)=\delta\alpha(\tilde{h}_{\chi(n)})\ldots\delta\alpha(\tilde{h}_{\chi(n-1)+1})\ldots\delta\alpha(\tilde{h}_{\chi_1})\ldots\delta\alpha(\tilde{h}_1),
\end{equation*}
and so to prove the desired first equality above it suffices to prove the equality
\begin{equation*}
\delta(h_j)=\delta\alpha(\tilde{h}_{\chi(j)})\ldots\delta\alpha(\tilde{h}_{\chi(j-1)+1}),\,\,\,\,\,\,\,\,(1)
\end{equation*}
for every $j=1,\ldots,n$. Let us do it assuming $1<j<n$ (since in the cases $j=1$ and $j=n$ it can be done similarly, but with a bit shorter calculation). For, putting $\chi(j)=i$, we observe:
\begin{enumerate}[(a)]
	\item If $\chi(j)=\chi(j-1)+1$, we have
	\begin{equation*}
	\begin{array}{rcl}
	\delta\alpha(\tilde{h}_{\chi(j)})\ldots\delta\alpha(\tilde{h}_{\chi(j-1)+1}) & = & \delta\alpha(\tilde{h}_{\chi(j)}) \\
		& = & \delta\alpha([\langle q_1r_i,q_2r_i\rangle][\langle s_{U_j}f_i,s_{U'_{\varphi(j)}}f_i\rangle][\langle q_3r_{i-1},q_2r_{i-1}\rangle]^{-1}) \\
		& = & \delta([q_1r_i][s_{U_j}f_i][q_3r_{i-1}]^{-1}),
	\end{array}
	\end{equation*}
	where the second equality follows from the third line in the definition of $\tilde{h}_i$.
	\item If $\chi(j)=\chi(j-1)+2$, which gives
	\begin{equation*}
	\tilde{t}_{\chi(j)-2}=\tilde{t}_{\chi(j-1)}=t_{j-1}\quad \text{and then}\quad \tilde{t}_{\chi(j)-1}=t'_{\varphi(j-1)}=t'_{\varphi(j)-1},
	\end{equation*}
	we have
	\begin{equation*}
	\resizebox{.945\textwidth}{!}{$
	\begin{array}{l}
	\delta\alpha(\tilde{h}_{\chi(j)})\ldots\delta\alpha(\tilde{h}_{\chi(j-1)+1})=\delta\alpha(\tilde{h}_{\chi(j)})\delta\alpha(\tilde{h}_{\chi(j)-1}) \\
	\quad =\delta\alpha([\langle q_1r_i,q_2r_i\rangle][\langle s_{U_j}f_i,s_{U'_{\varphi(j)}}f_i\rangle][\langle q_2r_{i-1},q_3r_{i-1}\rangle]^{-1})\delta\alpha(\tilde{h}_{\chi(j)-1}) \\
	\quad =\delta([q_1r_i][s_{U_j}f_i][q_2r_{i-1}]^{-1})\delta\alpha(\tilde{h}_{\chi(j)-1}) \\
	\quad =\delta([q_1r_i][s_{U_j}f_i][q_2r_{i-1}]^{-1})\delta\alpha([\langle q_2r_{i-1},q_1r_{i-1}\rangle][\langle s_{U_{\varphi'(\varphi(j)-1)}}f_{i-1},s_{U'_{\varphi(j)-1}}f_{i-1}\rangle][\langle q_3r_{i-2},q_2r_{i-2}\rangle]^{-1}) \\
	\quad =\delta([q_1r_i][s_{U_j}f_i][q_2r_{i-1}]^{-1})\delta([q_2r_{i-1}][ s_{U_{\varphi'(\varphi(j)-1)}}f_{i-1}][q_3r_{i-2}]^{-1}) \\
	\quad =\delta([q_1r_i][s_{U_j}f_i][q_2r_{i-1}]^{-1})\delta([q_2r_{i-1}][ s_{U_j}f_{i-1}][ q_3r_{i-2}]^{-1}) \\
	\quad =\delta([q_1r_i][s_{U_j}f_i][ s_{U_j}f_{i-1}][ q_3r_{i-2}]^{-1}),
	\end{array} $}
	\end{equation*}
	where:
	\begin{enumerate}[$-$]
		\item the second equality follows from the fifth line in the definition of $\tilde{h}_i$;
		\item the forth equality follows from the sixth line in the definition of $\tilde{h}_i$ with $i$ replaced with $i-1$;
		\item the sixth equality follows from the equation $\varphi'(\varphi(j)-1)=j$, while this equation is easy to check having in mind that $t'_{\varphi(j)-1}=\tilde{t}_{\chi(j)-1}$ and $t_j=\tilde{t}_{\chi(j)}$;
		\item the last equation follows from the fact that
		\begin{equation*}
		([q_1r_i][s_{U_j}f_i][q_2r_{i-1}]^{-1},[q_2r_{i-1}][ s_{U_j}f_{i-1}][ q_3r_{i-2}]^{-1})
		\end{equation*}
		is a composable pair of morphisms in $\pi_1(E,S)$.
	\end{enumerate}
	\item  Similarly, whenever $\chi(j)>\chi(j-1)+2$, one can show that $\delta\alpha(\tilde{h}_{\chi(j)})\ldots\delta\alpha(\tilde{h}_{\chi(j-1)+1})$ is given by
	\begin{equation*}
	\delta([q_1r_{\chi(j)}][s_{U_j}f_{\chi(j)}][ s_{U_j}f_{\chi(j)-1}]\ldots[ s_{U_j}f_{\chi(j)+2}][ s_{U_j}f_{\chi(j)+1}][ q_3r_{\chi(j)}]^{-1}).
	\end{equation*}
\end{enumerate}

Now, since (inside $\pi_1(E)$) we have $[s_{U_j}f_{\chi(j)}]\ldots[ s_{U_j}f_{\chi(j)+1}]=[s_{U_j}f_{\underline{U},j}]$, to prove $(1)$ it would suffice to prove the equality
	\begin{equation*}
	[p_1g_j][s_{U_j}f_{\underline{U},j}][p_2g_{j-1}]^{-1}=[q_1r_{\chi(j)}][s_{U_j}f_{\underline{U},j}][ q_3r_{\chi(j)}]^{-1}.\,\,\,\,\,\,\,\,(2)
	\end{equation*}
Moreover, thanks to Lemma \ref{thm:Delta-Identity1}, in  proving (2) we are allowed to change the component $\underline{g}$ of the weighted path $(f,n,\underline{t},\underline{U},\underline{g})$. But this makes the equality (2) trivial, since we can define $\underline{g}$ by $g_j=\langle q_1r_{\chi(j)},q_3r_{\chi(j)}\rangle$, which makes $p_1g_j=q_1r_{\chi(j)}$ and $p_2g_{j-1}=q_3r_{\chi(j)}$.
\end{proof}

\begin{definition}
	Let $P=(f,n,\underline{t},\underline{U},\underline{g})$ and $P'=(f',n',\underline{t'},\underline{U'},\underline{g'})$ be weighted paths with $f(0)=f'(0)$, $f(1)=f'(1)$, $n=n'$, $\underline{t}=\underline{t'}$, and $\underline{U}=\underline{U'}$. A homotopy $H:P\to P'$ is a continuous map $H:[0,1]\times[0,1]\to B$ with
	\begin{itemize}
		\item [(a)] $H(t,0)=f(t)$, $H(t,1)=f'(t)$, $H(0,t)=f(0)$, $H(0,t)=f(0)$ for every $t\in[0,1]$;
		\item [(b)] $H([t_{i-1},t_i]\times[0,1])\subseteq U_i$, for each $i=1,\ldots,n$.
	\end{itemize}
	We will say that $P$ and $P'$ are homotopic when such a homotopy $H:P\to P'$ does exist.	
\end{definition}

\begin{lemma}
\label{thm:Delta-Identity3}%
Let $(f,n,\underline{t},\underline{U},\underline{g})$ and $(f',n',\underline{t'},\underline{U'},\underline{g'})$ be homotopic weighted paths, and let $(h_n,\ldots,h_1)$ and $(h'_n,\ldots,h'_1)$ be their associated sequences of morphisms in $\pi_1(E,S)$. Then $\delta(h_n)\ldots\delta(h_1)=\delta(h'_{n'})\ldots\delta(h'_1)$.
\end{lemma}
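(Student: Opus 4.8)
The plan is to reduce the statement, via Lemma~\ref{thm:Delta-Identity2}, to a single convenient choice of weights, and then to show that with that choice the two associated sequences become \emph{literally equal}, morphism by morphism. First note that the definition of homotopy forces $n'=n$, $\underline{t}'=\underline{t}$ and $\underline{U}'=\underline{U}$, so the only genuine freedom in the second weighted path is its component $\underline{g}'$; and by Lemma~\ref{thm:Delta-Identity2} the product $\delta(h'_{n})\ldots\delta(h'_1)$ does not depend on $\underline{g}'$. Hence it suffices to fix the weight $\underline{g}$ of $f$ and to produce \emph{one} weight $\underline{g}'$ of $f'$, built from $\underline{g}$ and the given homotopy $H\colon P\to P'$, for which $h'_i=h_i$ in $\pi_1(E,S)$ for every $i=1,\ldots,n$.

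Write $H_i$ for the restriction of $H$ to $[t_{i-1},t_i]\times[0,1]$, so that $H_i$ maps into $U_i$, and for $i=1,\ldots,n-1$ put
\[
\gamma_i\colon[0,1]\to E\times_BE,\qquad \gamma_i(s)=\bigl(s_{U_i}H(t_i,s),\,s_{U_{i+1}}H(t_i,s)\bigr).
\]
This is well defined because $H(t_i,s)$ lies in $U_i\cap U_{i+1}$ (the point $t_i$ is the right end of $[t_{i-1},t_i]$ and the left end of $[t_i,t_{i+1}]$), so both coordinates are mapped to $H(t_i,s)$ by $p$. We have $\gamma_i(0)=(s_{U_i}f(t_i),s_{U_{i+1}}f(t_i))=g_i(0)$ and $\gamma_i(1)=(s_{U_i}f'(t_i),s_{U_{i+1}}f'(t_i))$. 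Let $g'_i$ be the path that first traverses $\gamma_i$ in reverse (from $\gamma_i(1)$ to $\gamma_i(0)=g_i(0)$) and then traverses $g_i$; then $g'_i(0)=(s_{U_i}f'(t_i),s_{U_{i+1}}f'(t_i))$ and $g'_i(1)=g_i(1)\in(E\times_BE)\times_BS$, so $(f',n,\underline{t},\underline{U},\underline{g}')$ is a weighted path, and $[g'_i]=[g_i][\gamma_i]^{-1}$ in $\pi_1(E\times_BE)$, whence $[p_1g'_i]=[p_1g_i][p_1\gamma_i]^{-1}$ and $[p_2g'_i]=[p_2g_i][p_2\gamma_i]^{-1}$ in $\pi_1(E)$.

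The last ingredient is the standard homotopy-square identity in $\pi_1(E)$ for the homotopy of paths $s_{U_i}\circ H_i$: its restrictions to $[t_{i-1},t_i]\times\{0\}$ and $[t_{i-1},t_i]\times\{1\}$ are $s_{U_i}f_i$ and $s_{U_i}f'_i$, and its side paths at $t_{i-1}$ and $t_i$ are $s\mapsto s_{U_i}H(t_{i-1},s)=p_2\gamma_{i-1}$ and $s\mapsto s_{U_i}H(t_i,s)=p_1\gamma_i$; so
\[
[s_{U_i}f'_i]=[p_1\gamma_i]\,[s_{U_i}f_i]\,[p_2\gamma_{i-1}]^{-1}\qquad(2\leqslant i\leqslant n-1),
\]
while for $i=1$ the side path at $t_0$ is constant (since $H(0,s)=f(0)$) and for $i=n$ the side path at $t_n$ is constant (since $H(1,s)=f(1)$), giving $[s_{U_1}f'_1]=[p_1\gamma_1][s_{U_1}f_1]$ and $[s_{U_n}f'_n]=[s_{U_n}f_n][p_2\gamma_{n-1}]^{-1}$. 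Substituting these and the formulas for $[p_jg'_i]$ into Definition~\ref{def:assocseq} and cancelling the $\gamma_i$'s gives $h'_i=h_i$ for every $i$; for instance, for $2\leqslant i\leqslant n-1$,
\[
h'_i=[p_1g_i][p_1\gamma_i]^{-1}\,[p_1\gamma_i][s_{U_i}f_i][p_2\gamma_{i-1}]^{-1}\,[p_2\gamma_{i-1}][p_2g_{i-1}]^{-1}=[p_1g_i][s_{U_i}f_i][p_2g_{i-1}]^{-1}=h_i.
\]
Since $g'_{i-1}(1)=g_{i-1}(1)$, these are equalities of morphisms in $\pi_1(E,S)$, so $\delta(h'_n)\ldots\delta(h'_1)=\delta(h_n)\ldots\delta(h_1)$; together with Lemma~\ref{thm:Delta-Identity2}, which lets us replace our weight $\underline{g}'$ by the one in the statement, this proves the lemma. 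I expect the only real work to be the bookkeeping of composition orders, the verification of the fibre-product claim for $\gamma_i$, and the two end cases $i=1,n$; the rest is formal.
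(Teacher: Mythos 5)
Your proof is correct and follows essentially the same route as the paper: you introduce the connecting paths $\gamma_i(s)=(s_{U_i}H(t_i,s),s_{U_{i+1}}H(t_i,s))$ (the paper's $H_i$), use the freedom to alter the $\underline{g}$-component of one of the weighted paths (you adjust $\underline{g}'$ via $[g'_i]=[g_i][\gamma_i]^{-1}$, the paper equivalently adjusts $\underline{g}$ via $[g_i]=[g'_i][H_i]$), and then apply the homotopy-square identity $[s_{U_i}f'_i]=[p_1\gamma_i][s_{U_i}f_i][p_2\gamma_{i-1}]^{-1}$ to get $h'_i=h_i$ termwise. The only cosmetic difference is that you invoke Lemma~\ref{thm:Delta-Identity2} where the weaker Lemma~\ref{thm:Delta-Identity1} (which the paper cites) already suffices, since $n$, $\underline{t}$, $\underline{U}$ are unchanged.
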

\begin{proof}
	Thanks to Lemma \ref{thm:Delta-Identity1}, we can change $\underline{g}$ and $\underline{g'}$ as it will be suitable for our argument. We will not change $\underline{g'}$, but, given a homotopy $H:P\to P'$, assume that $[g_i]$ $(i=1,\ldots,n-1)$ is the composite
	\begin{equation*}
	\xymatrix{(s_{U_i}f(t_i),s_{U_{i+1}}f(t_i))\ar[r]^{[H_i]}&(s_{U_i}f'(t_i),s_{U_{i+1}}f'(t_i))\ar[r]^-{[g'_i]}&g'_i(1),}
	\end{equation*}
	where $H_i$ is a path in $E\times_BE$ defined by $H_i(t)=(s_{U_i}H(t_i,t),s_{U_{i+1}}H(t_i,t))$.
	
	For $2\leqslant i\leqslant n-1$, consider the diagram
	\begin{equation*}
	\xymatrix{p_2g'_{i-1}(1)&s_{U_i}f'(t_{i-1})\ar[l]_-{[p_2g'_{i-1}]}\ar[r]^-{[s_{U_i}f'_i]}&s_{U_i}f'(t_i)\ar[r]^-{[p_1g'_i]}&p_1g'_i(1)\\&s_{U_i}f(t_{i-1})\ar[r]_-{[s_{U_i}f_i]}\ar[u]^{[p_2H_{i-1}]}&s_{U_i}f(t_i)\ar[u]_{[p_1H_i]}}
	\end{equation*}
	in $\pi_1(E)$. Since its square part represents (up to ordinary homotopy) the boundary of the $s_{U_i}H$-image of the rectangle $[t_{i-1},t_i]\times[0,1]$, we obtain
	\begin{equation*}
	[s_{U_i}f'_i]=[p_1H_i][s_{U_i}f_i][p_2H_{i-1}]^{-1},
	\end{equation*}
	which gives $h_i=h'_i$, according to the second formula in Definition 2.5 and our choice of $\underline{g}$. Proving that $h_i=h'_i$ for $i=1$ and $i=n$ is similar (and a bit shorter). Hence $\delta(h_n)\ldots\delta(h_1)=\delta(h'_{n'})\ldots\delta(h'_1)$, as desired.
\end{proof}

\begin{lemma}
	Let $f$ and $f'$ be paths in $B$ with endpoints in $S$ and $[f]=[f']$ in $\pi_1(B,S)$. There exists natural numbers $m$ and $n$, weighted paths $P^i=(f^i,n,\underline{t},\underline{U^i},\underline{g^i})$ ($i=1,\ldots,m$) and $P'^i=(f^i,n,\underline{t},\underline{U'^i},\underline{g'^i})$ ($i=0,\ldots,m-1$) with $f^0=f$, $f^1=f'$, $\underline{U'^{i-1}}=\underline{U^i}$, and homotopies $H^i:P'^{i-1}\to P^i$ $(i=1,\ldots,m)$.
\end{lemma}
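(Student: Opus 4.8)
The plan is to start from the hypothesis $[f]=[f']$ in $\pi_1(B,S)$, which (by definition of the fundamental groupoid on the set of base points $B\cap S$) provides an ordinary homotopy rel endpoints $F\colon[0,1]\times[0,1]\to B$ with $F(\cdot,0)=f$, $F(\cdot,1)=f'$, and $F(0,\cdot)$, $F(1,\cdot)$ constant at $f(0)\in S$ and $f(1)\in S$ respectively. The idea is then to subdivide the square $[0,1]\times[0,1]$ into a fine grid of small rectangles, each landing inside a single member of $\mathcal{U}$, and to read off all the required data slab by slab.

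Concretely, since $p$ is locally sectionable, $\mathcal{U}$ covers $B$, so $\{F^{-1}(U)\mid U\in\mathcal{U}\}$ is an open cover of the compact metric space $[0,1]\times[0,1]$ and has a Lebesgue number $\varepsilon>0$. First I would fix subdivisions $0=t_0<\cdots<t_n=1$ and $0=s_0<\cdots<s_m=1$ fine enough that every rectangle $R_{j,k}:=[t_{j-1},t_j]\times[s_{k-1},s_k]$ has diameter less than $\varepsilon$, and choose, for each $(j,k)$, some $U_{j,k}\in\mathcal{U}$ with $F(R_{j,k})\subseteq U_{j,k}$. Put $\underline{t}=(t_0,\ldots,t_n)$, $f^k:=F(\cdot,s_k)$ for $k=0,\ldots,m$ (so $f^0=f$, $f^m=f'$, and each $f^k$ has endpoints $f(0),f(1)$ in $S$), and $\underline{V^k}:=(U_{1,k},\ldots,U_{n,k})$ for $k=1,\ldots,m$.

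Next I would assemble the weighted paths and homotopies. Since $s_{k-1},s_k\in[s_{k-1},s_k]$, restricting $F$ to $[t_{j-1},t_j]\times\{s_{k-1}\}$ and to $[t_{j-1},t_j]\times\{s_k\}$ gives $f^{k-1}([t_{j-1},t_j])\subseteq U_{j,k}$ and $f^{k}([t_{j-1},t_j])\subseteq U_{j,k}$ for every $j$. Hence, setting $\underline{U^k}:=\underline{V^k}$ and $\underline{U'^{k-1}}:=\underline{V^k}$ (so that $\underline{U'^{k-1}}=\underline{U^k}$ holds automatically), the second assertion of Lemma~\ref{lem:weight} supplies a weight $\underline{g^k}$ of $(f^k,n,\underline{t},\underline{U^k},-)$ and a weight $\underline{g'^{k-1}}$ of $(f^{k-1},n,\underline{t},\underline{U'^{k-1}},-)$, producing weighted paths $P^k$ and $P'^{k-1}$. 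Finally, define $H^k\colon[0,1]\times[0,1]\to B$ by $H^k(t,s):=F\bigl(t,\,s_{k-1}+s(s_k-s_{k-1})\bigr)$; then $H^k(\cdot,0)=f^{k-1}$, $H^k(\cdot,1)=f^k$, both $H^k(0,\cdot)$ and $H^k(1,\cdot)$ are constant, and $H^k([t_{j-1},t_j]\times[0,1])=F(R_{j,k})\subseteq U_{j,k}$, which is precisely what is required for $H^k$ to be a homotopy $P'^{k-1}\to P^k$ in the sense of the definition preceding Lemma~\ref{thm:Delta-Identity3}.

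I do not expect a serious obstacle; the whole argument is a Lebesgue-number subdivision of the square. The two points that need a little care are: (i) choosing one $t$-subdivision that works simultaneously for all horizontal slabs of $F$, so that all weighted paths in the conclusion share the same $n$ and $\underline{t}$ — this is exactly why one takes the Lebesgue number of the whole square rather than handling one slab at a time; and (ii) the fact that a single underlying path $f^k$ must carry two different tuples of open sets, namely $\underline{U^k}=\underline{V^k}$ inherited from the slab below it and $\underline{U'^k}=\underline{V^{k+1}}$ inherited from the slab above it, which is why the statement records both $P^k$ and $P'^k$ and only demands the compatibility $\underline{U'^{k-1}}=\underline{U^k}$. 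The $\underline{g}$-components are irrelevant to the homotopies and are simply produced by Lemma~\ref{lem:weight}.
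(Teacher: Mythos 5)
Your proposal is correct and follows essentially the same route as the paper: take a rel-endpoints homotopy realizing $[f]=[f']$, subdivide the square into a grid of rectangles each mapped into some member of $\mathcal{U}$, read off the intermediate paths and open-set tuples slab by slab (so that $P'^{k-1}$ and $P^k$ share the same tuple of open sets), supply the $\underline{g}$-components via the second assertion of Lemma~\ref{lem:weight}, and obtain the homotopies by rescaling each horizontal slab. The only cosmetic difference is that the paper cites a ready-made subdivision lemma (Lemma 6.8.5 of \cite{BoJ}) where you invoke the Lebesgue number lemma directly.
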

\begin{proof}
	$[f]=[f']$ means that there exists $H:[0,1]\times[0,1]\to B$ satisfying condition (a) of Definition 2.9. Then, according to Lemma 6.8.5 in \cite{BoJ}, there exist finite sequences $x_0,\ldots,x_m$ and $y_0,\ldots,y_m$ of real numbers with
	\begin{enumerate}[$\bullet$]
		\item $0=x_0<\ldots<x_m=1$ and $0=y_0<\ldots<y_n=1$,
		\item for every $(i,j)\in\{1,\ldots,m\}\times\{1,\ldots,n\}$, there exist $U_{(i,j)}\in\mathcal{U}$ with
		\begin{equation*}
		H([y_{j-1},y_j]\times[x_{i-1},x_i])\subseteq U_{(i,j)}.
		\end{equation*}
	\end{enumerate}
	Using these sequences we can define
	\begin{enumerate}[$\bullet$]
		\item $f^i$ by $f^i(y)=H(y,x_i)$ ($i=0,\ldots,m$),
		\item $\underline{t}$ by $t_j=y_j$ ($j=0,\ldots,n$),
		\item $\underline{U^i}$ by $\underline{U^i}_j=U_{(i,j)}$ ($i=1,\ldots,m$ and $j=1,\ldots,n$),
		\item $\underline{U'^i}$ by $\underline{U'^i}_j=U_{(i+1,j)}$ ($i=0,\ldots,m-1$ and $j=1,\ldots,n$),
	\end{enumerate}
	and choose $\underline{g^0},\ldots,\underline{g^m}$ and $\underline{g'^0},\ldots,\underline{g'^m}$ arbitrarily (see the last part of Lemma 2.4). It remains to note that $H$ induces homotopies $H^i:P'^{i-1}\to P^i$ ($i=1,\ldots,m$) via $[x_{i-1},x_i]\approx[0,1]$.
\end{proof}

Now, following Lemmas \ref{thm:Delta-Identity1} and \ref{thm:Delta-Identity2}, we make their strongest version:

\begin{lemma}
\label{thm:Delta-Identity4}%
	Let $(f,n,\underline{t},\underline{U},\underline{g})$ and $(f',n',\underline{t'},\underline{U'},\underline{g'})$ be weighted paths with $[f]=[f']$ in $\pi_1(B,S)$, and $(h_n,\ldots,h_1)$ and $(h'_{n'},\ldots,h'_1)$ their associated sequences of morphisms in $\pi_1(E,S)$. Then $\delta(h_n)\ldots\delta(h_1)=\delta(h'_{n'})\ldots\delta(h'_1)$.
\end{lemma}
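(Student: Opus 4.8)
The plan is to string together the previous lemmas into a zigzag of weighted paths. Observe first that, by Lemma~\ref{thm:Delta-Identity2}, the value of $\delta(h_n)\ldots\delta(h_1)$ depends only on the underlying path $f$ and not on the particular weight $(n,\underline t,\underline U,\underline g)$ chosen for it; likewise $\delta(h'_{n'})\ldots\delta(h'_1)$ depends only on $f'$. Hence it suffices to exhibit \emph{one} weighted path whose underlying path is $f$ and \emph{one} whose underlying path is $f'$ with coinciding associated $\delta$-products, and then invoke Lemma~\ref{thm:Delta-Identity2} at both ends to transfer this back to the two weighted paths in the statement.

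Such weighted paths are exactly what the preceding lemma delivers. Applying it to the pair $f,f'$---legitimate because $[f]=[f']$ in $\pi_1(B,S)$---we obtain natural numbers $m,n$, a subdivision $\underline t$, weighted paths $P^1,\ldots,P^m$ and $P'^0,\ldots,P'^{m-1}$, and homotopies $H^i\colon P'^{i-1}\to P^i$ for $i=1,\ldots,m$, in which $P'^0$ has underlying path $f$ and $P^m$ has underlying path $f'$. For a weighted path $P$ write $\Delta(P)$ for the product $\delta(h_n)\ldots\delta(h_1)$ of its associated sequence. Along each homotopy $H^i$ the weighted paths $P'^{i-1}$ and $P^i$ share the same $n$, the same $\underline t$ and, because $\underline{U'^{i-1}}=\underline{U^i}$, the same $\underline U$; thus $H^i$ is a homotopy of weighted paths in the sense of Definition~2.9, and Lemma~\ref{thm:Delta-Identity3} yields $\Delta(P'^{i-1})=\Delta(P^i)$. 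At each intermediate stage $P^i$ and $P'^i$ have the common underlying path $f^i$, so Lemma~\ref{thm:Delta-Identity2} yields $\Delta(P^i)=\Delta(P'^i)$. Concatenating these equalities gives
\begin{equation*}
\Delta(P'^0)=\Delta(P^1)=\Delta(P'^1)=\Delta(P^2)=\ldots=\Delta(P'^{m-1})=\Delta(P^m),
\end{equation*}
and one last application of Lemma~\ref{thm:Delta-Identity2} at the two ends gives $\delta(h_n)\ldots\delta(h_1)=\Delta(P'^0)=\Delta(P^m)=\delta(h'_{n'})\ldots\delta(h'_1)$, as required.

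There is no genuine difficulty here: the analytic content has already been extracted---one-rectangle homotopy invariance is Lemma~\ref{thm:Delta-Identity3}, and the Lebesgue-type chopping of a full path homotopy into squares subordinate to $\mathcal U$ is the preceding lemma---so what remains is bookkeeping. The two places that call for a line of care are checking that Definition~2.9 really applies to each pair $(P'^{i-1},P^i)$, which is precisely what the relation $\underline{U'^{i-1}}=\underline{U^i}$ records, and noting that Lemma~\ref{thm:Delta-Identity2} is stated with enough generality---only the underlying path must agree---to absorb the changes of subdivision and of the open sets $U_i$ that occur between stages and at the ends.
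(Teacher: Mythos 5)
Your proof is correct and takes essentially the same route as the paper's: both invoke the zigzag of weighted paths and homotopies supplied by Lemma 2.11 and then alternate Lemma \ref{thm:Delta-Identity2} (same underlying path, different weight) with Lemma \ref{thm:Delta-Identity3} (homotopic weighted paths) to chain the equalities from $f$ to $f'$. Your added checks---that $\underline{U'^{i-1}}=\underline{U^i}$ makes Definition 2.9 applicable at each homotopy step, and that Lemma \ref{thm:Delta-Identity2} absorbs the changes of subdivision at the ends---are exactly the bookkeeping the paper leaves implicit.
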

\begin{proof}
	Let us write $P=(f,n,\underline{t},\underline{U},\underline{g})$, $\delta(h_n)\ldots\delta(h_1)=\delta[P]$, and use similar notation for other weighted paths. Using weighted paths $P^i$ ($i=1,\ldots,m$) and $P'^i$ ($i=0,\ldots,m-1$) from Lemma 2.11, we see that
	\begin{enumerate}[$\bullet$]
		\item $\delta[P]=\delta[P'^0]$ by Lemma \ref{thm:Delta-Identity2};
		\item $\delta[P'^0]=\delta[P^1]$ by Lemma \ref{thm:Delta-Identity3};
		\item $\delta[P^1]=\delta[P'^1]$ by Lemma \ref{thm:Delta-Identity2} again, and so on.
	\end{enumerate}
	This gives $\delta[P]=\delta[P']$, as desired.
\end{proof}

Lemma \ref{thm:Delta-Identity4} easily implies that sending $[f]$ to $\delta(h_n)\ldots\delta(h_1)$, where $(h_n,\ldots,h_1)$ is as in Definition \ref{def:assocseq}, determines a functor $\varepsilon:\pi_1(B,S)\to G$, and, to prove Theorem \ref{thm:main}, it remains to prove that it is the unique functor with $\varepsilon\gamma=\delta$.

To prove the equality $\varepsilon\gamma=\delta$, we chose:
\begin{enumerate}[$\bullet$]
	\item any path $e$ in $E$ with endpoints in $E\times_BS=p^{-1}(S)$;
	\item a triple $(n,\underline{t},\underline{U})$ as in Definition \ref{def:wpath} with $f=p(e)$, $e_i$ denoting the restriction of $e$ on $[t_{i-1},t_i]$, and $f_i$ denoting the map $[t_{i-1},t_i]\to U_i$ induced by $f$;
	\item paths $r_i:[0,1]\to E\times_BE\times_BE$ $(i=1,\ldots,n-1)$ with $r_i(0)=(s_{U_i}f(t_i),e(t_i),s_{U_{i+1}}f(t_i))$ and $r_i(1)$ in the inverse image $(E\times_BE\times_BE)\times_BS$ of $S$ under the canonical map
	\begin{equation*}
	E\times_BE\times_BE\to B,
	\end{equation*}
	for each $i=0,\ldots,n$ (again, such choices are possible by our assumption on path-components of $E\times_BE\times_BE$ in Theorem \ref{thm:main}).
\end{enumerate}
Then, using the weighted path $(f,n,\underline{t},\underline{U},\underline{g})$, in which $g_i=\langle q_1r_i,q_3r_i\rangle$, and its associated sequence $h_n,\ldots,h_1$, we can write:
\begin{equation*}
\begin{array}{rcl}
\varepsilon\gamma([e]) & = & \varepsilon([f])=\delta(h_n)\ldots\delta(h_1) \\
	& = & \delta([s_{U_n}f_n][p_2g_{n-1}]^{-1})\ldots\delta([p_1g_i][s_{U_i}f_i][p_2g_{i-1}]^{-1})\ldots\delta([p_1g_1][s_{U_1}f_1]) \\
	& = & \delta([s_{U_n}f_n][q_3r_{n-1}]^{-1})\ldots\delta([q_1r_i][s_{U_i}f_i][q_3r_{i-1}]^{-1})\ldots\delta([q_1r_1][s_{U_1}f_1]).
\end{array}
\end{equation*}
In order to complete this calculation, note that:
\begin{enumerate}[(a)]
	\item  $ps_{U_i}f_i=pe_i$, $pq_3r_i=pq_2r_i$, and $pq_1r_i=pq_2r_i$;
	\item  as follows from (a), $\langle s_{U_i}f_i,e_i\rangle$, $\langle q_3r_i,q_2r_i\rangle$, and $\langle q_1r_i,q_2r_i\rangle$ are paths in $E$;
	\item  thanks to (b), we can write:
	\begin{equation*}
	\resizebox{.945\textwidth}{!}{$
	\begin{array}{rcl}
	\delta([s_{U_n}f_n][q_3r_{n-1}]^{-1}) & = & \delta\alpha([\langle s_{U_n}f_n,e_n\rangle][\langle q_3r_{n-1},q_2r_{n-1}\rangle]^{-1}) \\
		& = & \delta\beta([\langle s_{U_n}f_n,e_n\rangle][\langle q_3r_{n-1},q_2r_{n-1}\rangle]^{-1})=\delta([e_n][q_2r_{n-1}]^{-1}), \\
	\delta([q_1r_i][s_{U_i}f_i][q_3r_{i-1}]^{-1}) & = & \delta\alpha([\langle q_1r_i,q_2r_i\rangle][\langle s_{U_i}f_i,e_i\rangle][\langle q_3r_{i-1},q_2r_{i-1}\rangle]^{-1}) \\
		& = & \delta\beta([\langle q_1r_i,q_2r_i\rangle][\langle s_{U_i}f_i,e_i\rangle][\langle q_3r_{i-1},q_2r_{i-1}\rangle]^{-1})=\delta([q_2r_i][e_i][q_2r_{i-1}]^{-1}), \\
	\delta([q_1r_1][s_{U_1}f_1]) & = & \delta\alpha([\langle q_1r_1,q_2r_1\rangle][\langle s_{U_1}f_1,e_1\rangle]) \\
		& = & \delta\beta([\langle q_1r_1,q_2r_1\rangle][\langle s_{U_1}f_1,e_1\rangle])=\delta([q_2r_1][e_1]);
	\end{array} $}
	\end{equation*}
	\item  the sequence $[e_n][q_2r_{n-1}]^{-1},\ldots,[q_2r_i][e_i][q_2r_i]^{-1},\ldots,[q_2r_1][e_1]$ is a composable sequence of morphisms in $\pi_1(E,S)$, and so
	\begin{equation*}
	\begin{array}{l}
	\delta([e_n][q_2r_{n-1}]^{-1})\ldots\delta([q_2r_i][e_i][q_2r_i]^{-1})\ldots\delta([q_2r_1][e_1]) \\
	\qquad =\delta([e_n][q_2r_{n-1}]^{-1}\ldots[q_2r_i][e_i][q_2r_i]^{-1}\ldots[q_2r_1][e_1])=\delta([e_n]\ldots[e_1])=\delta([e]).
	\end{array}
	\end{equation*}
\end{enumerate}
That is,
\begin{equation*}
\begin{array}{rcl}
\varepsilon\gamma([e]) & = & \delta([s_{U_n}f_n][q_3r_{n-1}]^{-1})\ldots\delta([q_1r_i][s_{U_i}f_i][q_3r_{i-1}]^{-1})\ldots\delta([q_1r_1][s_{U_1}f_1]) \\
	& = & \delta([e_n][q_2r_{n-1}]^{-1})\ldots\delta([q_2r_i][e_i][q_2r_i]^{-1})\ldots\delta([q_2r_1][e_1])=\delta([e])
\end{array}
\end{equation*}
(where the second equality follows from (c)), which proves that $\varepsilon$ satisfies the equality $\varepsilon\gamma=\delta$.

The uniqueness is easy: for any functor $\varepsilon:\pi_1(B,S)\to G$ with $\varepsilon'\gamma=\delta$, and any path $f$ in $\pi_1(B,S)$, we have:
\begin{equation*}
\varepsilon'([f])=\varepsilon'(\gamma(h_n)\ldots\gamma(h_1))=\varepsilon'\gamma(h_n)\ldots\varepsilon'\gamma(h_1))=\delta(h_n)\ldots\delta(h_1)=\varepsilon([f]),
\end{equation*}
where:
\begin{enumerate}[$\bullet$]
	\item $(h_n,\ldots,h_1)$ is the sequence of morphisms in $\pi_1(E,S)$ associated with any weighted path constructed for $f$, which does exist by Lemma \ref{lem:weight}.
	\item the first equality follows from Lemma \ref{lem:comp}(b).
\end{enumerate}
This completes our proof of Theorem  \ref{thm:main}.

\section{Additional remarks}
\label{sec:AdditionalRemarks}

In what follows, $\mathbf{Set}$, $\mathbf{Top}$, and $\mathbf{Grpd}$ denote the categories of sets, of topological spaces, and of groupoids, respectively; $\mathbf{\Delta}$ denotes the simplicial category, and, accordingly, $\mathbf{Set}^{\mathbf{\Delta}^{\mathrm{op}}}$ denotes the category of simplicial sets.

Taking $S=B$ in Theorem \ref{thm:main} we obtain:
\begin{corollary}
\label{cor:1}
	Let $p:E\to B$ be a locally sectionable continuous map of topological spaces. There is a coequalizer diagram
	\begin{equation*}\xymatrix{\pi_1(E\times_BE)\ar@<0.5ex>[r]^-{\alpha}\ar@<-0.5ex>[r]_-{\beta}&\pi_1(E)\ar[r]^-{\gamma}&\pi_1(B)},
	\end{equation*}
	in $\mathbf{Grpd}$, in which the functors $\alpha$, $\beta$, and $\gamma$ are induced respectively by the first and the second projection $E\times_BE\to E$, and by $p$.
\end{corollary}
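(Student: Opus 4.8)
The plan is to deduce Corollary \ref{cor:1} from Theorem \ref{thm:main} by specializing the base-point set to $S = B$. The only thing that needs checking is that the hypothesis of Theorem \ref{thm:main} becomes vacuous in this case: we must verify that $E\times_B E\times_B E\times_B S$ meets every path-component of $E\times_B E\times_B E$. But when $S = B$, the canonical map $E\times_B E\times_B E \to B$ has the whole of $E\times_B E\times_B E$ as the inverse image of $S$, so the intersection condition is trivially satisfied (indeed $E\times_B E\times_B E\times_B B \cong E\times_B E\times_B E$). Hence Theorem \ref{thm:main} applies with no restriction on $E$ or $p$ beyond local sectionability.

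Next I would unwind the notational conventions. Recall from Section \ref{sec:MainTheorem} the convention that $\pi_1(E,S)$ abbreviates $\pi_1(E, E\times_B S)$ and $\pi_1(E\times_B E, S)$ abbreviates $\pi_1(E\times_B E, E\times_B E\times_B S)$. With $S = B$ we have $E\times_B B \cong E$ and $E\times_B E\times_B B \cong E\times_B E$, so $\pi_1(E,S)$ becomes $\pi_1(E, E) = \pi_1(E)$ — the fundamental groupoid on the full set of points — and likewise $\pi_1(E\times_B E, S)$ becomes $\pi_1(E\times_B E)$ and $\pi_1(B,S)$ becomes $\pi_1(B)$. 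The functors $\alpha$, $\beta$, $\gamma$ of Theorem \ref{thm:main}, induced respectively by the two projections $p_1,p_2 : E\times_B E \to E$ and by $p$, are exactly the functors named in the statement of the corollary. Substituting these identifications into the coequalizer diagram of Theorem \ref{thm:main} yields precisely the asserted coequalizer
\begin{equation*}
\xymatrix{\pi_1(E\times_BE)\ar@<0.5ex>[r]^-{\alpha}\ar@<-0.5ex>[r]_-{\beta}&\pi_1(E)\ar[r]^-{\gamma}&\pi_1(B)}
\end{equation*}
in $\mathbf{Grpd}$.

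There is essentially no obstacle here: the corollary is a direct specialization, and the only mild point requiring a word is the reduction of the hypothesis to a triviality, which I would state explicitly so the reader is not left wondering whether some connectivity assumption has been silently dropped. I would therefore present the argument in two short sentences: first that the path-component hypothesis of Theorem \ref{thm:main} holds automatically when $S = B$ since $E\times_B E\times_B E\times_B B = E\times_B E\times_B E$; and second that, under the abbreviation conventions, the three groupoids and three functors appearing in Theorem \ref{thm:main} are, for $S = B$, literally those in the corollary — whence the claim.
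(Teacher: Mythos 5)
Your proposal is correct and matches the paper exactly: the paper obtains Corollary \ref{cor:1} simply by taking $S=B$ in Theorem \ref{thm:main}, which is precisely your specialization, and your explicit check that the path-component hypothesis becomes vacuous when $S=B$ is the right (if routine) point to note.
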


\begin{remark}
\label{rem:1}%
	Let us mention a couple of much easier counterparts of Corollary \ref{cor:1}:
	\begin{enumerate}[(a)]
		\item Suppose $p:E\to B$ itself is sectionable, that is, $p$ is a split epimorphism in $\mathbf{Top}$. Then the diagram
		\begin{equation*}\xymatrix{E\times_BE\ar@<0.5ex>[r]\ar@<-0.5ex>[r]&E\ar[r]&B},
		\end{equation*}
		becomes an absolute coequalizer diagram, that is, it is preserved not just by $\pi_1$, but by every functor defined on $\mathbf{Top}$. In particular, this is the case when $p$ is a trivial fibre bundle.
		\item \label{rem:2}%
		More generally, suppose the image $S^\Delta (p)$ of $p:E\to B$ under the singular complex functor $S^\Delta:\mathbf{Top}\to \mathbf{Set}^{\mathbf{\Delta}^{\mathrm{op}}}$ is an epimorphism. Then the diagram of Corollary \ref{cor:1} is again a coequalizer diagram since: (i) the functor $S^\Delta$ being a right adjoint preserves pullbacks, and so
		\begin{equation*}\xymatrix{S^\Delta(E\times_BE)\ar@<0.5ex>[r]\ar@<-0.5ex>[r]&S^\Delta(E)},
		\end{equation*}
		is the kernel pair of $S^\Delta(p)$; (ii) every epimorphism in $\mathbf{Set}^{\mathbf{\Delta}^{\mathrm{op}}}$ is a coequalizer of its kernel pair; (iii) the fundamental groupoid functor $\mathbf{Set}^{\mathbf{\Delta}^{\mathrm{op}}}\to \mathbf{Grpd}$ being a left adjoint preserves coequalizers, and, composed with $S^\Delta$, gives the fundamental groupoid functor $\mathbf{Top}\to\mathbf{Grpd}$. Moreover, our assumption here can be weakened by using certain truncated simplicial sets instead of all simplicial sets.
	\end{enumerate}
\end{remark}

Corollary \ref{cor:1} could be called the `absolute case' of Theorem \ref{thm:main}. The `opposite extreme case', which seems to be a new simple way of calculating the fundamental groups of some topological spaces is:
\begin{corollary}
\label{cor:3}
	Let $p:E\to B$ be a locally sectionable continuous map of topological spaces, and $x$ an element of $B$ such that the inverse image $(E\times_BE\times_BE)\times_B\{x\}$ of $x$ under the canonical map $E\times_BE\times_BE\to B$ meets every path-component of $E\times_BE\times_BE$. Then the diagram
	\begin{equation*}\xymatrix{\pi_1(E\times_BE,\{x\})\ar@<0.5ex>[r]^-{\alpha}\ar@<-0.5ex>[r]_-{\beta}&\pi_1(E,\{x\})\ar[r]^-{\gamma}&\pi_1(B,x),}
	\end{equation*}
	in which the functors $\alpha$, $\beta$, and $\gamma$ are induced by the first and the second projection $E\times_BE\to E$, and by $p$, respectively, is a coequalizer diagram in $\mathbf{Grpd}$.
\end{corollary}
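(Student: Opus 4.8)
The plan is to derive this as the special case $S=\{x\}$ of the Main Theorem \ref{thm:main}. The only thing to check is that the hypotheses of Theorem \ref{thm:main} are met with this choice of $S$. First I would observe that the assumption ``$(E\times_BE\times_BE)\times_B\{x\}$ meets every path-component of $E\times_BE\times_BE$'' is literally the hypothesis of Theorem \ref{thm:main} once one notes that, for $S=\{x\}$, the inverse image $E\times_BE\times_BE\times_BS$ is exactly $(E\times_BE\times_BE)\times_B\{x\}$. Since $p$ is assumed locally sectionable, Theorem \ref{thm:main} applies verbatim and produces a coequalizer diagram
\begin{equation*}
\xymatrix{\pi_1(E\times_BE,\{x\})\ar@<0.5ex>[r]^-{\alpha}\ar@<-0.5ex>[r]_-{\beta}&\pi_1(E,\{x\})\ar[r]^-{\gamma}&\pi_1(B,\{x\})}
\end{equation*}
in $\mathbf{Grpd}$, where by the notational convention fixed before Theorem \ref{thm:main} the first two groupoids denote $\pi_1(E, E\times_B\{x\})$ and $\pi_1(E\times_BE, (E\times_BE)\times_B\{x\})$. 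Finally I would note that $\pi_1(B,\{x\})$ is just $\pi_1(B,x)$, which is the one-object groupoid corresponding to the fundamental group, so the displayed diagram is the one in the statement, with $\alpha$, $\beta$, $\gamma$ induced by the two projections $E\times_BE\to E$ and by $p$ exactly as in Theorem \ref{thm:main}.

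There is essentially no obstacle here: the corollary is a direct specialization, and the only step requiring a word of care is matching the fiber-product notation, i.e. recognizing that $E\times_BE\times_BE\times_B\{x\}$ and $(E\times_BE\times_BE)\times_B\{x\}$ are the same space, and similarly that $\pi_1(E,\{x\}) = \pi_1(E, p^{-1}(x))$ and $\pi_1(E\times_BE,\{x\}) = \pi_1(E\times_BE, (E\times_BE)\times_B\{x\})$ under the standing conventions. No new homotopy-theoretic input is needed beyond Theorem \ref{thm:main} itself.
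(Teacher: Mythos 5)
Your proposal is correct and is exactly how the paper obtains this corollary: it is the direct specialization $S=\{x\}$ of Theorem \ref{thm:main}, with the only work being the notational identification of $E\times_BE\times_BE\times_B S$ with $(E\times_BE\times_BE)\times_B\{x\}$ and of $\pi_1(B,\{x\})$ with $\pi_1(B,x)$. No further comment is needed.
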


\begin{remark}
	When $B$ is a `good' space admitting a universal covering map $p:E\to B$, the groupoids $\pi_1(E\times_BE)$ and $\pi_1(E)$ are coproducts of indiscrete groupoids. Therefore Theorem \ref{thm:main} and Corollaries \ref{cor:1} and \ref{cor:3} present $\pi_1(B,S)$, $\pi_1(B)$, and $\pi_1(B,x)$, respectively, as colimits of indiscrete groupoids.
\end{remark}

\begin{remark}
	Since the fundamental group(oid) of a space $B$ does not in general classify covering spaces over $B$,   one cannot deduce Theorem \ref{thm:basic vKT} from the results of \cite{BJ} based on categorical Galois theory \cite{J}. Nevertheless,  a detailed comparison of the two approaches would be desirable, especially since all locally sectionable continuous maps are effective descent morphisms in $\mathbf{Top}$. In particular, it would be interesting to answer the following questions:
	\begin{enumerate}[(a)]
		\item  What is a necessary and sufficient condition on $p:E\to B$ under which the diagram considered in Corollary \ref{cor:1} is a coequalizer diagram in $\mathbf{Grpd}$?
		\item  Compare the diagrams considered in Theorem \ref{thm:main}  and in Corollary \ref{cor:1}. In general there is no way to conclude that the first of them is a coequalizer diagram whenever the second one is. What are reasonable conditions on $p:E\to B$ under which such conclusion can be made?
	\end{enumerate}
\end{remark}

\section{Theorem 1.1 almost implies Theorem 2.2}
\label{sec:(1.1)->(2.2)}

Let $\mathbf{Top_2}$ be the category of pairs $(X,A)$, where $X$ is a topological space and $A$ is a subset of $X$. In the notation of Section \ref{sec:MainTheorem}, consider the diagram
\begin{equation*}
\xymatrix{(E\times_BE\times_BE'\times_BE',S)\ar@<0.5ex>[r]\ar@<-0.5ex>[r]\ar@<0.5ex>[d]\ar@<-0.5ex>[d] &
	(E\times_BE'\times_BE',S)\ar@<0.5ex>[d]\ar@<-0.5ex>[d]\ar[r] &
	(E'\times_BE',S)\ar@<0.5ex>[d]\ar@<-0.5ex>[d] \\
(E\times_BE\times_BE',S)\ar[d]\ar@<0.5ex>[r]\ar@<-0.5ex>[r] &
	(E\times_BE',S)\ar[d]\ar[r] &
	(E',S)\ar[d]^{p'}\\(E\times_BE,S)\ar@<0.5ex>[r]\ar@<-0.5ex>[r] &
	(E,S)\ar[r]_p &
	(B,S)}
\end{equation*}
in $\mathbf{Top_2}$, in which:
\begin{enumerate}[$\bullet$]
	\item $E'$ is the coproduct of all elements of $\mathcal{U}$, and $p'$ is induced by all the coproduct injections;
	\item presenting pairs we use the same convention as for the fundamental groupoids, that is, we write $(E,S)$ instead of $(E,E\times_BS)$, etc.
	\item the bottom right-hand square is a pullback and all unlabeled arrows are obviously defined canonical morphisms.	
\end{enumerate}
We observe:
\begin{enumerate}[(a)]
	\item Once the columns of this diagram satisfy the \textit{triple intersection assumption} of Theorem \ref{thm:basic vKT}, each of them is carried to a coequalizer diagram by the fundamental groupoid functor $\mathbf{Top_2}\to\mathbf{Grpd}$.
	\item The first two rows of this diagram are absolute coequalizer diagrams (for the same reason as in Remark \ref{rem:1}(a)).
	\item As follows from (b), the first two rows of this diagram are also carried to a coequalizer diagram by the fundamental groupoid functor $\mathbf{Top_2}\to\mathbf{Grpd}$.
	\item Under the assumption made in (a), Theorem \ref{thm:main} follows from (a) and (c).
\end{enumerate}
That is, under a mild additional condition, Theorem \ref{thm:main} can be deduced from \ref{thm:basic vKT} using the pullback of $p$ and $p'$ above and simple purely-categorical arguments. In particular, this mild additional condition obviously holds when $S=B$.

\newpage
\refs

\bibitem [Borceux-Janelidze, 2001]{BoJ} F. Borceux and G. Janelidze, \emph{Galois Theories}. Cambridge Studies in Advanced Mathematics 72, Cambridge University Press, 2001.
	
\bibitem [Brown, 2006]{B} R. Brown, \emph{Elements of modern topology}. McGraw Hill (1968), Revised version available as \emph{Topology and Groupoids} (2006) from amazon.

\bibitem [Brown, 1987]{BLMS}R. Brown, \emph{From groups to groupoids: a brief survey}.   Bull. London Math. Soc. 19 (1987) 113-134.

\bibitem [Brown, 2018]{Indag} R. Brown, \emph{Modelling and Computing Homotopy Types: I}. Indagationes Math. (Special issue in honor of L.E.J. Brouwer) 29 (2018) 459-482.

\bibitem [Brown-Janelidze, 1997]{BJ} R. Brown and G. Janelidze, \emph{Van Kampen theorem for categories of covering morphisms in lextensive categories}. J. Pure Appl. Algebra 119, 1997, 255-263.

\bibitem [Brown-Salleh, 1984]{BS} R. Brown and A. R. Salleh, \emph{A van Kampen theorem for unions of non-connected spaces}. Arch. Math. 42, 1984, 85-88.

\bibitem [Brown-Higgins-Sivera, 2011]{BHS} R. Brown,  P.~J. Higgins, and R. Sivera,  \emph{Nonabelian algebraic topology: filtered spaces, crossed complexes, cubical homotopy groupoids}. EMS Tracts in Mathematics Vol 15. European Mathematical Society (2011).

\bibitem [Douady-Douady, 1979]{D} A. {D}ouady and R. {D}ouady. \emph{Algebres et th\'eories {G}aloisiennes}. Volume~2. CEDIC, Paris (1979).

\bibitem [Gill-Gillespie-Semeraro, 2018]{GGC} N. Gill,  N.I. Gillespie, \&  J. Semeraro,  \emph{Conway groupoids and completely transitive codes}. Combinatorica 38, 399-442 (2018). \url{https://doi.org/10.1007/s00493-016-3433-7}

\bibitem [Grothendieck, 1984]{GrEsq} A. Grothendieck, \emph{Esquisse d'un Programme} in \emph{Geometric Galois Acitions, 1. Around Grothendieck's Esquisse d'un Programme}. Edited by Leila Schneps (CNRS) and Pierre Lochak (CNRS), Cambridge University Press, London Mathematical Society LNS 242, 5-48 (1997).

\bibitem [Higgins, 2005]{H} P.~J.Higgins, \emph{Notes on Categories and Groupoids}. {Mathematical Studies}, Volume~32. Van Nostrand Reinhold Co. (1971); Reprints in Theory and Applications of Categories, No. 7 (2005) 1-195.

\bibitem [Janelidze, 1991]{J} G. Janelidze, \emph{Precategories and Galois theory}. Proc. Int. Conf. ``Category Theory 1990'', Como (Italy), Lecture Note in Mathematics 1488, Springer 1991, 157-173.

\bibitem [Janelidze-Tholen, 1991]{JT} G. Janelidze and W. Tholen, \emph{How algebraic is the change-of-base functor?} Proc. Int. Conf. ``Category Theory 1990'', Como (Italy), Lecture Note in Mathematics 1488, Springer 1991, 174-186.

\bibitem [Nikolaus-Schweigert, 2011]{NS} T. Nikolaus and C.  Schweigert, \emph{Equivariance in higher geometry}. Adv. Math. 226 (2011), no. 4, 3367-3408.

\endrefs

\end{document}